	\newtheorem{proposition}{Proposition}
	\pgfplotsset{compat=newest}
	\newlength\figureheight
	\newlength\figurewidth
\newcommand{\figref}[1]{Figure~\ref{#1}}
\newcommand{\tabref}[1]{Table~\ref{#1}}
\newcommand{\secref}[1]{Section~\ref{#1}}
\newcommand{\R}{\mathbb{R}}
\newcommand{\N}{\mathbb{N}}
\newcommand{\bx}{\boldsymbol{x}}
\newcommand{\by}{\boldsymbol{y}}
\newcommand{\TheTitle}[1]{\def\TheTitle{#1}}
\newcommand{\TheAuthors}[1]{\def\TheAuthors{#1}}
\definecolor{green}{RGB}{0,151,76}
\definecolor{linecolor4}{RGB}{233,98,30}
\definecolor{linecolor5}{RGB}{165,165,167}
\definecolor{kul}{RGB}{32,140,173}
\definecolor{BarColorOne}{RGB}{32,140,173}
\definecolor{BarColorTwo}{RGB}{255,0,0}
\definecolor{darkgrey}{rgb}{0.8,0.8,0.8}
\definecolor{lightgrey}{rgb}{0.7,0.7,0.7}
\definecolor{darkred}{RGB}{139,0,0}
\definecolor{darkgreen}{RGB}{0,100,0}
\definecolor{darkmagenta}{RGB}{180,0,180}
\definecolor{darkblue}{RGB}{0,0,190}
\definecolor{ratecolor1}{RGB}{0,100,0}
\definecolor{ratecolor2}{RGB}{139,0,0}
\definecolor{ratecolor3}{RGB}{0,0,190}
\newcolumntype{P}[1]{>{\centering\let\newline\\\arraybackslash\hspace{0pt}}m{#1}}
\newcolumntype{L}[1]{>{\raggedright\let\newline\\\arraybackslash\hspace{0pt}}m{#1}}
\newcolumntype{R}[1]{>{\raggedleft\let\newline\\\arraybackslash\hspace{0pt}}m{#1}}
\pgfplotsset{
    LineStyle1/.style={color=kul,solid,line width=0.75pt},
    LineStyle2/.style={color=red,solid,line width=0.75pt},
    LineStyle3/.style={color=green,solid,line width=0.75pt},
    LineStyle4/.style={color=linecolor4,solid,line width=0.75pt},
    LineStyle5/.style={color=linecolor5,solid,line width=0.75pt},
    DottedLineStyle1/.style={LineStyle1,mark size=1,mark=*,line cap=round},
    DottedLineStyle2/.style={LineStyle2,mark size=1,mark=*,line cap=round},
    DottedLineStyle3/.style={LineStyle3,mark size=1,mark=*,line cap=round},
    DottedLineStyle4/.style={LineStyle4,mark size=1,mark=*,line cap=round},
    DottedLineStyle5/.style={LineStyle5,mark size=1,mark=*,line cap=round},
    DashedDottedLineStyle1/.style={LineStyle1,dashed,mark size=1,mark=*,line cap=round,mark options={solid}},
    DashedDottedLineStyle2/.style={LineStyle2,dashed,mark size=1,mark=*,line cap=round,mark options={solid}},
}
\newcommand{\LogLogSlopeTriangle}[5]
{

    \pgfplotsextra
    {
        \pgfkeysgetvalue{/pgfplots/xmin}{\xmin}
        \pgfkeysgetvalue{/pgfplots/xmax}{\xmax}
        \pgfkeysgetvalue{/pgfplots/ymin}{\ymin}
        \pgfkeysgetvalue{/pgfplots/ymax}{\ymax}

        \pgfmathsetmacro{\xArel}{#1}
        \pgfmathsetmacro{\yArel}{#3}
        \pgfmathsetmacro{\xBrel}{#1-#2}
        \pgfmathsetmacro{\yBrel}{\yArel}
        \pgfmathsetmacro{\xCrel}{\xBrel}

        \pgfmathsetmacro{\lnxB}{\xmin*(1-(#1-#2))+\xmax*(#1-#2)} 
        \pgfmathsetmacro{\lnxA}{\xmin*(1-#1)+\xmax*#1} 
        \pgfmathsetmacro{\lnyA}{\ymin*(1-#3)+\ymax*#3} 
        \pgfmathsetmacro{\lnyC}{\lnyA+1.1*#4*(\lnxA-\lnxB)}
        \pgfmathsetmacro{\yCrel}{\lnyC-\ymin)/(\ymax-\ymin)}
        
        \coordinate (A) at (rel axis cs:\xArel,\yArel);
        \coordinate (B) at (rel axis cs:\xBrel,\yBrel);
        \coordinate (C) at (rel axis cs:\xCrel,\yCrel);

        \draw[#5]   (A)--node[pos=0.9,yshift=1ex,xshift=0.5ex] {\scriptsize #4}
                    (B)--
                    (C)-- 
                    cycle;
    }
}
\def\addlegendimage{\pgfplots@addlegendimage}
\newcommand{%
	\tikzexternalenable
	\tikzsetnextfilename{}%
	\input{figures/.tikz}%
	\tikzexternaldisable
}[1]{%
	\tikzexternalenable
	\tikzsetnextfilename{#1}%
	\input{figures/#1.tikz}%
	\tikzexternaldisable
}
\newcommand{%
	\tikzexternalenable
	\tikzsetnextfilename{}%
	\input{figures/.tikz}%
	\tikzexternaldisable
}[2]{%
	\tikzexternalenable
	\tikzsetnextfilename{#2}%
	\input{figures/#1.tikz}%
	\tikzexternaldisable
}
\newcommand{\nbshifts}{R}
\newcommand{\ridx}{r}
\newcommand{\shift}{\Delta}
\newcommand{\stochd}{s}
\newcommand{\levelcntr}{\ell}
\newcommand{\StochCntr}{j}
\newcommand{\klcntr}{\StochCntr}
\newcommand{\ExpName}{\text{E}}
\newcommand{\ExpOf}[1]{\ExpName[{#1}]}
\newcommand{\VarName}{\text{V}}
\newcommand{\VarOf}[1]{\VarName[{#1}]}
\newcommand{\SecondQoiName}{\Upsilon}
\newcommand{\MeanEstimatorWithSampleReuse}{\bar{\SecondQoiName}}
\newcommand{\MeanQMCEstimatorWithoutSampleReuse}{\MeanEstimatorWithSampleReuse_{\levelcntr}}
\newcommand{\EigenvalueName}{\theta}
\newcommand{\Eigenvalue}{\EigenvalueName_\klcntr}
\title{Recycling Samples in the Multigrid \\Multilevel (Quasi-)Monte Carlo Method}
\author{Pieterjan Robbe\thanks{KU Leuven, Department of Computer Science, NUMA Section, Celestijnenlaan 200A box 2402, 3001 Leuven, Belgium (\href{mailto:pieterjan.robbe@kuleuven.be}{pieterjan.robbe@kuleuven.be}, \href{mailto:dirk.nuyens@kuleuven.be}{dirk.nuyens@kuleuven.be}, \href{mailto:stefan.vandewalle@kuleuven.be}{stefan.vandewalle@kuleuven.be}).}
\and Dirk Nuyens\footnotemark[1]
\and Stefan Vandewalle\footnotemark[1]}
\date{}
\begin{document}

\maketitle

\begin{abstract}
The Multilevel Monte Carlo method is an efficient variance reduction technique. It uses a sequence of coarse approximations to reduce the computational cost in uncertainty quantification applications. The method is nowadays often considered to be the method of choice for solving PDEs with random coefficients when many uncertainties are involved. When using Full Multigrid to solve the deterministic problem, coarse solutions obtained by the solver can be recycled as samples in the Multilevel Monte Carlo method, as was pointed out by Kumar, Oosterlee and Dwight [\emph{Int. J. Uncertain. Quantif.}, 7 (2017), pp. 57--81]. In this article, an alternative approach is considered, using Quasi-Monte Carlo points, to speed up convergence. Additionally, our method comes with an improved variance estimate which is also valid in case of the Monte Carlo based approach. The new method is illustrated on the example of an elliptic PDE with lognormal diffusion coefficient. Numerical results for a variety of random fields with different smoothness parameters in the Mat\'ern covariance function show that sample recycling is more efficient when the input random field is nonsmooth.
\end{abstract}

\pgfplotstableread[header=false]{data/params.dat}\params
\pgfplotstableread[header=false]{plot_data/rates_timings.txt}\rates
\pgfplotstableread[header=false]{plot_data/speedup.txt}\speedup

\section{Introduction and background}\label{sec:intro}

Mathematical models in science or engineering that are subject to uncertainty often involve spatially varying input data represented as a random field. The uncertainty characterized by the input random field leads to a corresponding uncertainty in the output of the model and in any quantity of interest derived thereof. The computational goal is then to find statistics of this quantity of interest, such as an expected value or a higher-order moment.

Throughout this article, we consider the model parametric elliptic partial differential equation (PDE)
\begin{equation}\label{eq:PDE}
-\nabla\cdot a(\bx,\boldsymbol{y})\nabla u(\bx,\boldsymbol{y})=f(\bx), \quad \bx \in D
\end{equation}
where $D$ is a bounded domain in $\R^d$, $d=1,2,3$, with boundary $\partial D$. We assume deterministic boundary conditions are given. The diffusion coefficient $a(\bx,\by)$ is a random field (i.e., a random variable for which every realization is a function defined on $D$), and $\by=(y_j)_{j\geq1}$ is an infinite sequence which characterizes an instance of the random field.
In practice, this infinite number of random variables needs to be truncated.

Depending on the characteristics of the random field, and the way it is represented, many uncertain parameters can be needed to accurately model the random field. Sampling-based methods, such as the Monte Carlo method, are the preferred tool to deal with these many uncertainties. They require the solution of problem~\eqref{eq:PDE} for every draw of a sufficiently large sample set ${\boldsymbol{y}}^{(1)}, {\boldsymbol{y}}^{(2)}, \ldots, {\boldsymbol{y}}^{(N)}$, where ${\boldsymbol{y}}^{(i)}$ is a finite-dimensional vector, and $N$ is the number of samples. From this sample set, conclusions are made about the statistics of a quantity of interest $F(u)$, such as the expected value, variance or higher-order moments. Here, $F$ is a functional that is applied to the solution $u$, for example a point evaluation of $u$ or the average value over a subset of the domain $D$. Note that uncertainty in the diffusion coefficient $a(\bx,\boldsymbol{y})$ leads to uncertainty in the solution $u(\bx,\boldsymbol{y})$, and hence in the quantity of interest $F(u)$. In the remainder of the text, we make this relation explicit by using the shorthand notation $F(\boldsymbol{y})\coloneqq F(u(\bx,\boldsymbol{y}))$.

In 2008, the Multilevel Monte Carlo (MLMC) method was introduced as an alternative for standard Monte Carlo sampling, see~\cite{giles2008multilevel}. Instead of sampling the quantity of interest directly on the desired model resolution, sampling is done on the differences of a coarse-to-fine hierarchy of model approximations. Most samples will be taken on the coarse approximations, where samples are cheap, and only few samples are required on the finest approximations. This explains the huge benefit that is commonly attributed to MLMC methods.  For details, see, e.g.,~\cite{giles2015multilevel,cliffe2011multilevel,barth2011multi} and references therein.

Since then, many extensions of the multilevel method have been presented by various authors. A notable example is the Multilevel Quasi-Monte Carlo (MLQMC) method, see~\cite{giles2009multilevel,robbe2016thesis,kuo2015multilevel}, where the random sampling from the Monte Carlo method is replaced by deterministic sampling to speed up convergence. Another extension is the Multi-Index Monte Carlo (MIMC) method, see~\cite{haji2016multi}, where the multilevel hierarchy is replaced by a multidimensional hierarchy of model approximations. See also \cite{robbe2017multi} for its extension to QMC.

In~\cite{kumar2017multigrid}, a combination of Multigrid and Multilevel Monte Carlo was presented. In this method, a Full Multigrid (FMG) approach is used to solve the deterministic PDE in every sample from~\eqref{eq:PDE}. For every solution at a fine approximation, the method provides samples on all preceding coarser approximations, which can be reused in the estimator. In this work, we present an alternative approach, based on the aforementioned MLQMC method, which also allows the recycling of coarse samples. Additionally, our method comes with a mathematically sound estimate for the variance of the obtained estimator, which is also valid in the Monte Carlo based approach.

This article is organized as follows. In \secref{sec:intro_stoch}, we discuss the multilevel sampling framework in more detail and introduce the Multigrid Multilevel Monte Carlo (MG-MLMC) method. We review the practical implementation, at which point we introduce the quasi-Monte Carlo variant (MG-MLQMC), and analyze the cost of the estimator in \secref{sec:MGMLMC}. After that, we turn our focus to the PDE example outlined above. We discuss FMG in \secref{sec:intro_det}, and random field generation using the Karhunen--Lo\`eve (KL) expansion in \secref{sec:KL}. The article is concluded with various numerical experiments in~\secref{sec:numerical_results}, which confirm the theory.

\section{Multigrid Multilevel Monte Carlo}\label{sec:intro_stoch}

The classic sampling-based method is the Monte Carlo method. A Monte Carlo estimator for the expected value of the quantity of interest $\text{E}[{F}]$ is simply an average over $N$ realizations of the uncertain parameters, i.e.,
\begin{equation}
{\mathcal{Q}}^\text{MC}
\coloneqq
\frac{1}{N}
\sum_{n=1}^{N}
F({\boldsymbol{y}}^{(n)})
,\nonumber
\end{equation}
where ${\boldsymbol{y}}^{(n)}$ denotes the $n$-th independent realization. The Monte Carlo method has the well-known convergence of the root mean square error (RMSE) of order $O(1/\sqrt{N})$, for functions with a finite variance. This makes the method rather slow in real engineering applications, where many expensive samples are required to reach a certain accuracy. However, the use of multilevel sampling methods to reduce the cost of the estimator makes the Monte Carlo approach feasible in engineering practice. In the remainder of this section, we briefly discuss this multilevel extension, before introducing the MG-MLMC method.

\subsection{Multilevel Monte Carlo}\label{sec:mlmc}

Monte Carlo sampling can be combined with a multilevel idea to yield an efficient sampling method. Assume there is a hierarchy of approximations, conveniently called \emph{levels}, $F_{0}$, $F_{1}$, \ldots, $F_{L}$ that converge to the true quantity of interest $F(u)$ for $L \to \infty$. 

An example of such a hierarchy is the solution of a PDE on a mesh with decreasing grid size. Instead of estimating the quantity of interest directly on the finest and most expensive grid, the multilevel method estimates the quantity of interest on the coarsest and cheapest grid, $F_{0}$, together with a series of correction terms (or \emph{differences}) $F_{\ell}-F_{\ell-1}$, $\ell=1,\ldots,L$. These differences constitute the telescoping sum
\begin{equation}\label{eq:telsum}
  \text{E}[{F_{L}}]
  =
  \sum_{\ell=0}^L \text{E}[{F_{\ell} - F_{\ell-1}}]
  ,
\end{equation}
that lies at the basis of any multilevel method, and where we set $F_{-1} = 0$. The expectation of the differences in~\eqref{eq:telsum} can be computed using the standard Monte Carlo method, resulting in the Multilevel Monte Carlo method
\begin{equation}\label{eq:MLMC}
{\mathcal{Q}}^\text{MC}_{L}
\coloneqq
\sum_{\ell=0}^{L}
\frac{1}{N_\ell}
\sum_{n=1}^{N_\ell}
\left(
F_{\ell}({\boldsymbol{y}}^{(n)}_{\ell}) - 
F_{\ell-1}({\boldsymbol{y}}^{(n)}_{\ell})
\right)
,
\nonumber
\end{equation}
where ${\boldsymbol{y}}^{(n)}_{\ell}$ denotes the $n$-th realization of the random parameters at level $\ell$. The subscript $L$ in ${\mathcal{Q}}^\text{MC}_{L}$ denotes the finest level used in the estimator. The unknown number of samples on each level, $N_\ell\in\N$, and the total number of levels $L$, remain to be determined.

Notice that the same random numbers ${\boldsymbol{y}}^{(n)}_{\ell}$ are used to compute a sample of the difference $F_{\ell}-F_{\ell-1}$. The hope is that $F_{\ell}$ and $F_{\ell-1}$ are strongly positively correlated, and the variance of the difference $V_{\ell} \coloneqq \text{V}[{F_{\ell}-F_{\ell-1}}]$ is reduced, since then
\begin{equation}
V_{\ell} = 
\text{V}[{F_{\ell}}] + 
\text{V}[{F_{\ell-1}}] - 
2 \text{cov}(F_{\ell},F_{\ell-1}) 
\ll 
\text{V}[{F_{\ell}}] + 
\text{V}[{F_{\ell-1}}].
\nonumber
\end{equation}
Furthermore, as the level $\ell$ increases, the expectation of the difference $F_{\ell}-F_{\ell-1}$ decreases, and the variance reduction will be more pronounced.
Hence, the total number of samples $N_\ell$ decreases to zero as $\ell$ increases, and fewer samples will be required on finer levels.

In a typical MLMC simulation, the error is controlled by bounding the mean square error (MSE)
\begin{equation}\label{eq:MSE}
\text{MSE}\hspace{-2pt}\left({\mathcal{Q}}^\text{MC}_{L}\right) = 
\text{E}\hspace{-2pt}\left[
\left(
{\mathcal{Q}}^\text{MC}_{L} -
\text{E}[{F}]
\right)^2
\right] = 
\text{V}[{{\mathcal{Q}}^\text{MC}_{L}}] + 
\text{Bias}\hspace{-2pt}\left({{\mathcal{Q}}^\text{MC}_{L}}\right)^2,
\end{equation}
by a fixed tolerance $\epsilon^2$. This condition is fulfilled if both terms in~\eqref{eq:MSE} are smaller than $\epsilon^2/2$.

Since the variance of the estimator is given by the sum of the variances over the levels, and by using the classic argument of Giles from ref.~\cite{giles2008multilevel}, the cost of the estimator is minimized for a fixed variance satisfying
\begin{equation}\label{eq:vardecomp}
\text{V}[{{\mathcal{Q}}^\text{MC}_{L}}] = \sum_{\ell=0}^{L} N_\ell^{-1} V_{\ell} \leq \epsilon^2/2,
\end{equation}
if the number of samples is chosen as
\begin{equation}\label{eq:nopt}
N_\ell = \left\lceil \frac{2}{\epsilon^2}  \sqrt{\frac{V_{\ell}}{C_{\ell}}} \left( \sum_{\tau=0}^L \sqrt{V_{\tau}C_{\tau}} \right)  \right\rceil,
\end{equation}
where $C_{\ell}$ represents the cost to compute a single sample on level $\ell$.

\subsection{Formulation and main idea}\label{sec:main}

Besides efficient sampling methods, such as the MLMC method outlined above, one also requires good deterministic solvers to compute the solution $u(\bx,\boldsymbol{y})$ for every sample of the random field $a(\bx,\boldsymbol{y})$ in~\eqref{eq:PDE}. One particular choice are Multigrid methods. A Full Multigrid (FMG) method can compute a solution to discretization accurcay in $\mathcal{O}(M)$ operations, where $M$ is the number of degrees of freedom. FMG uses a hierarchy of coarse approximations, much similar to the MLMC hierarchy, to accelerate convergence on the fine grid. Further details of the FMG method will be deferred to~\secref{sec:intro_det}.

The idea of the MG-MLMC method is essentially to use the same hierarchy of coarse approximations from the FMG solver as levels in the MLMC method.  Every deterministic solution on level $\ell$ using FMG will additionally give free coarse solutions on all levels $0\leq k<\ell$, which could be recycled in the MLMC estimator, however, these coarse solutions are not uncorrelated, as required by~\eqref{eq:vardecomp}.

The MG-MLMC estimator, as it was first introduced in \cite{kumar2017multigrid}, can be written as
\begin{equation}\label{eq:MGMLMC}
{\mathcal{Q}}^\text{MC}_{L,\text{reuse}}
\coloneqq
\sum_{\ell=0}^{L}
\left(
\frac{1}{\sum_{i=\ell}^{L}N_i}
\right)
\sum_{k=\ell}^{L}
\sum_{n=1}^{N_k}
\left(
F_{\ell}({\boldsymbol{y}}^{(n)}_{k}) - 
F_{\ell-1}({\boldsymbol{y}}^{(n)}_{k})
\right)
.
\end{equation}
Here, the additional sum over $k$ represents the sample recycling, and we assume $N_0 \ge N_1 \ge \cdots \ge N_L$. Fully expanded for $L=2$, the MG-MLMC estimator reads
\begin{align}
{\mathcal{Q}}^\text{MC}_{2,\text{reuse}} &=
\frac{1}{N_0+N_1+N_2} 
\left( 
\sum_{n=1}^{N_0}
F_{0}({\boldsymbol{y}}^{(n)}_{0})
 +
\sum_{n=1}^{N_1}
F_{0}({\boldsymbol{y}}^{(n)}_{1})
 +
\sum_{n=1}^{N_2}
F_{0}({\boldsymbol{y}}^{(n)}_{2})
\right)
\nonumber
\\ 
&+
\frac{1}{N_1+N_2} 
\left(
\sum_{n=1}^{N_1}
\left(
F_{1}({\boldsymbol{y}}^{(n)}_{1}) -
F_{0}({\boldsymbol{y}}^{(n)}_{1})
\right)
 +
\sum_{n=1}^{N_2}
\left(
F_{1}({\boldsymbol{y}}^{(n)}_{2}) -
F_{0}({\boldsymbol{y}}^{(n)}_{2})
\right)
\right)\nonumber
\\
&+
\frac{1}{N_2} 
\left(
\sum_{n=1}^{N_2}
\left(
F_{2}({\boldsymbol{y}}^{(n)}_{2}) -
F_{1}({\boldsymbol{y}}^{(n)}_{2})
\right)
\right).
\nonumber
\end{align}
Equation~\eqref{eq:MGMLMC} is a sum of $L+1$ terms, where each term is given by
\begin{equation}
{\Upsilon}_\ell
\coloneqq
\left(
\frac{1}{\sum_{i=\ell}^{L}N_i}
\right)
\sum_{k=\ell}^{L}
\sum_{n=1}^{N_k}
\left(
F_{\ell}({\boldsymbol{y}}^{(n)}_{k}) - 
F_{\ell-1}({\boldsymbol{y}}^{(n)}_{k})
\right)
,
\end{equation}
which are estimators for the difference $F_{\ell}-F_{\ell-1}$, using independent random numbers ${\boldsymbol{y}}^{(n)}_{k}$. However, these estimators are not mutually uncorrelated. Hence, the variance of the estimator is
\begin{align}\label{eq:vardecomp2}
\text{V}[{{\mathcal{Q}}^\text{MC}_{L,\text{reuse}}}]
&=
\sum_{\ell=0}^{L}
\text{V}[{
{\Upsilon}_\ell
}]
+ 2
\sum_{0 \leq \ell < \tau \leq L}
\text{cov}(
{\Upsilon}_\ell
,
{\Upsilon}_\tau
)
\nonumber
\\
&=
\sum_{\ell=0}^{L}
\left(
\frac{
V_{\ell}
}
{
\sum_{i=\ell}^{L}N_i
}\right)
+ 2
\sum_{0 \leq \ell < \tau \leq L}
\rho_{\ell\tau} 
\sqrt{
\left(
\frac{
V_{\ell}
}
{
\sum_{i=\ell}^{L}N_i
}\right)
\left(
\frac{
V_{\tau}
}
{
\sum_{i=\tau}^{L}N_i
}\right)
},
\end{align}
where $\text{cov}(
{\Upsilon}_\ell
,
{\Upsilon}_\tau
)$ denotes the unknown covariance and $\rho_{\ell\tau}$ denotes the unknown correlation between ${\Upsilon}_\ell$ and ${\Upsilon}_\tau$. Compare this to equation~\eqref{eq:vardecomp}, which was a key element in allowing us to compute the required number of samples at each level. For the MG-MLMC estimator however, no such simple relationship exists, because of the covariance terms. In the next section, we will present a simple yet effective fix, that allows one to compute the variance of the estimator in a mathematically rigorous way.
Note that \cite{kumar2017multigrid} just used~\eqref{eq:vardecomp} as an approximation for \eqref{eq:vardecomp2}.

\section{Practical aspects and implementation}\label{sec:MGMLMC}

In this section, we discuss a particular implementation of the MG-MLMC method, and analyze its cost. We also present a practical algorithm for MG-MLMC simulation. 

\subsection{Obtaining an estimate for the variance}\label{sec:var}

There are several ways to obtain an estimate for the variance in equation~\eqref{eq:vardecomp2}.

\begin{itemize}
\item As a first idea, we could assume that the correlation coefficients $\rho_{\ell\tau}$ in ~\eqref{eq:vardecomp2} are all equal to 1. This allows the computation of the optimal number of samples $N_\ell$ on each level as a function of $V_{\ell}$ and $C_{\ell}$, similar to~\eqref{eq:nopt}. However, we find numerically that the variance of the estimator, obtained in this way, is a huge overestimation. Hence, the required number of samples is too large, and we do not obtain an efficient estimator.
\item Alternatively, we can use techniques known from adaptive Multilevel Monte Carlo methods, see, e.g., \cite{hoel2012adaptive,elfverson2016multilevel,eigel2016adaptive}. These methods deal with sample-dependent refinements, either using an a priori error estimate or a trial run to determine the adaptive mesh hierarchy. A variance estimate can be obtained using the debiasing technique from Rhee and Glynn, see~\cite{rhee2015unbiased}. See also~\cite{detommaso2018continuous} for details on how this can be applied in the usual MLMC setting.
\item Finally, we can use the random shifting technique known from Quasi-Monte Carlo (QMC) literature. Here, the idea is to use $\nbshifts$ independent estimators and use the sample variance over these estimators to replace~\eqref{eq:vardecomp2}. Since QMC methods have the potential to also obtain a faster convergence of the RMSE compared to standard Monte Carlo, we will choose this third approach.
\end{itemize}

\subsection{Multigrid Multilevel Quasi-Monte Carlo}\label{sec:mlqmc}

In the QMC method, the random samples from the Monte Carlo method are replaced by deterministically well-chosen sample points. These points are commonly known as Quasi-Monte Carlo points, see, e.g.,~\cite{dick2013high}. Popular choices include Sobol' sequences~\cite{sobol1967distribution} and digital nets~\cite{niederreiter1992random}. In this article, we consider rank-1 lattice rules in the $\stochd$-dimensional unit cube $[0,1]^\stochd$, where the points are given by
\begin{equation}
{\boldsymbol{t}}^{(n)} \coloneqq \frac{n\boldsymbol{z}\;\text{mod}\;N}{N} = \left\{\frac{n\boldsymbol{z}}{N}\right\},
\quad n = 0, 1, \ldots, N-1,
\nonumber
\end{equation}
with $\boldsymbol{z}\in\mathbb{Z}_N^\stochd$ a generating vector and $\{\cdot\}$ denotes the mod $1$ or fractional part operation, see, e.g.,~\cite{sloan1994lattice,nuyens2006fast}. 

To calculate the MSE in equation~\eqref{eq:MSE}, we use multiple randomly shifted lattice rules. The $\ridx$-th shifted version of the $n$-th point for level $\ell$ is then given by
\begin{equation}
{\boldsymbol{t}}^{(n)}_{\ell,\ridx} \coloneqq  \left\{\frac{n\boldsymbol{z}}{N} + {\boldsymbol{\shift}}^{(\ridx)}_{\ell}\right\},  \quad
\ridx = 1,2,\ldots,\nbshifts,
\nonumber
\end{equation}
where $\nbshifts$ is the number of shifts and ${\boldsymbol{\shift}}^{(\ridx)}_{\ell}$ are i.i.d.\ samples from a uniform distribution on $[0,1]^\stochd$. 
It can be shown that for functions with sufficient smoothness and dimensions that become progressively less important, there exist lattice rules for which the RMSE decays as $O(N^{-1+\epsilon})$, for arbitrary $\epsilon>0$, see~\cite{kuo2011quasi}.
For the numerical experiments in this paper we will consider a lognormal random field with Mat\'ern covariance which will be represented by an infinite KL-expansion. The dimensionality $\stochd$ of the QMC points is then equal to the number of terms in the truncated KL-expansion, this will be discussed in \secref{sec:KL}.
In general, QMC error bounds are specified for integration over the $\stochd$-dimensional unit cube, however the KL-expansion here uses i.i.d.\ standard normal variates.
Luckily, for randomly shifted lattice rules mapped through the inverse cumulative density of the normal distribution, as we will use here, there is an equivalent theory which shows $O(N^{-1+\epsilon})$ convergence for the RMSE, see, e.g., \cite{kuo2016application}.

We denote the multilevel estimators that uses the $r$-th randomized point set by
\begin{align}
{\mathcal{Q}}^\text{QMC}_{L,\ridx}
&\coloneqq
\sum_{\ell=0}^{L}
{\Upsilon}_{\ell,\ridx},
\quad
\text{where}
\nonumber
\end{align}
\begin{align}
{\Upsilon}_{\ell,\ridx}
&\coloneqq
\frac{1}{N_\ell}
\sum_{n=1}^{N_\ell}
\left(
F_{\ell}({\boldsymbol{y}}^{(n)}_{\ell,\ridx}) - 
F_{\ell-1}({\boldsymbol{y}}^{(n)}_{\ell,\ridx})
\right)
,  \quad
\ridx = 1,2,\ldots,\nbshifts,
\nonumber
\end{align}
with ${\boldsymbol{y}}^{(n)}_{\ell,\ridx}\coloneqq{\Phi}^{-1}({\boldsymbol{t}}^{(n)}_{\ell,\ridx})$ and where ${\Phi}^{-1}$ denotes the inverse cumulative density function (CDF) of the desired distribution of the parameter set $\boldsymbol{y}$. This provides the required mapping for the QMC points, defined on the unit cube, to the domain of interest.

The MLQMC estimator, as introduced in~\cite{giles2009multilevel,robbe2016thesis,kuo2015multilevel}, is just the average over these $\nbshifts$ estimators, i.e.,
\begin{equation}\label{eq:MLQMC}
{\bar{\mathcal{Q}}}^\text{QMC}_{L,\nbshifts}
\coloneqq
\frac{1}{\nbshifts}\sum_{\ridx=1}^{\nbshifts}
{\mathcal{Q}}^\text{QMC}_{L,\ridx}.
\nonumber
\end{equation}
The sample variance over the shifts is used as an estimate for the variance, i.e.,
\begin{equation}\label{eq:varovershifts}
\text{V}[{{\bar{\mathcal{Q}}}^\text{QMC}_{L,\nbshifts}}]
\approx 
\frac{1}{\nbshifts(\nbshifts-1)} 
\sum_{\ridx=1}^{\nbshifts}
\left(
{\mathcal{Q}}^\text{QMC}_{L,\ridx} - {\bar{\mathcal{Q}}}^\text{QMC}_{L,\nbshifts}
\right)^2.
\end{equation}
Note that in the limit $\nbshifts\rightarrow\infty$ this sample variance is (almost surely) equal to the true variance of the estimator.

A similar approach is now used for the MG-MLMC estimator from equation~\eqref{eq:MGMLMC}, i.e., we define
\begin{align}
\nonumber
{\mathcal{Q}}^\text{QMC}_{L,\ridx,\text{reuse}}
&\coloneqq
\sum_{\ell=0}^{L}
{\Upsilon}_{\ell,\ridx,\text{reuse}},
\\
\label{eq:Upsilon-reuse}
{\Upsilon}_{\ell,\ridx,\text{reuse}}
&\coloneqq
\left(
\frac{1}{\sum_{i=\ell}^{L}N_i}
\right)
\sum_{k=\ell}^{L}
\sum_{n=1}^{N_k}
\left(
F_{\ell}({\boldsymbol{y}}^{(n)}_{k,\ridx}) - 
F_{\ell-1}({\boldsymbol{y}}^{(n)}_{k,\ridx})
\right)
\quad \text{and}
\\
\label{eq:MGMLQMC}
{\bar{\mathcal{Q}}}^\text{QMC}_{L,\nbshifts,\text{reuse}}
&\coloneqq
\frac{1}{\nbshifts}\sum_{\ridx=1}^{\nbshifts}
{\mathcal{Q}}^\text{QMC}_{L,\ridx,\text{reuse}},
\end{align}
in effect turning the MG-MLMC estimator into a Multigrid Multilevel Quasi-Monte Carlo (MG-MLQMC) estimator. The variance of this estimator, that is, the equivalent of~\eqref{eq:vardecomp2}, is computed similar to equation~\eqref{eq:varovershifts}, i.e.,
\begin{equation}\label{eq:var_of_mgmlqmc}
\text{V}[{{\bar{\mathcal{Q}}}^\text{QMC}_{L,\nbshifts,\text{reuse}}}]
\approx 
\frac{1}{\nbshifts(\nbshifts-1)} 
\sum_{\ridx=1}^{\nbshifts}
\left(
{\mathcal{Q}}^\text{QMC}_{L,\ridx,\text{reuse}} - {\bar{\mathcal{Q}}}^\text{QMC}_{L,\nbshifts,\text{reuse}}
\right)^2.
\end{equation}
The MLQMC estimator~\eqref{eq:MLQMC} and the MG-MLQMC estimator~\eqref{eq:MGMLQMC} are essentially the same, the only difference being the recycling of samples on coarser levels.

\subsection{Cost analysis}\label{sec:cost_anal}

In this section, we will analyze the cost of the MG-MLQMC estimator (i.e., the MLQMC method with recycling) and compare it to the cost of the MLQMC method without recycling. For ease of exposition, assume that the quantity of interest $F_{\ell}$ at level $\ell$ corresponds to a discretization of the problem with step size~$h_\ell$.
As in~\cite{haji2014optimization}, we use a geometric hierarchy for the levels:
\begin{equation}\label{eq:grids}
h_{\ell} = \varrho \, h_{\ell+1},
\end{equation}
where $\varrho > 1$ is a scaling factor. In the PDE setting, a typical value is $\varrho = 2$.

As is usual in MLMC analysis, see, e.g., \cite[Theorem~1]{cliffe2011multilevel}, we make the following assumptions:
\begin{align}
\tag{A1}\label{A1}
\left|\text{E}[F_L-F]\right|
&\leq
c_\alpha \, h_L^\alpha,
\\
\tag{A2}\label{A2}
V_{\ell}
&\leq
c_\beta \, h_{\ell-1}^\beta,
\\
\tag{A3}\label{A3}
C_{\ell}
&\leq
c_\gamma \, h_{\ell}^{-\gamma},
\end{align}
for some constants $c_\alpha$, $c_\beta$ and $c_\gamma$.
Here, $\alpha > 0$ is the rate of the decrease of the bias in terms of~$L$, $\beta > 0$ is the rate of the decrease of the variance of the differences in terms of the levels~$\ell$ and $\gamma > 0$ gives the increase in cost of solving the PDE.
Additionally, we make an assumption on the convergence rate of the lattice rule used in the MLQMC estimator. That is, we define
\begin{equation}\label{eq:meanofqmcestimators}
\bar{\Upsilon}_{\ell}
\coloneqq
\frac{1}{\nbshifts} \sum_{\ridx=1}^{\nbshifts} {\Upsilon}_{\ell,\ridx},
\end{equation}
the approximation for the expected value of the difference $F_{\ell}-F_{\ell-1}$ using a randomly shifted lattice rule, averaged over all shifts, and assume
\begin{align}
\tag{A4}\label{A4}
\text{V}[{{\bar{\Upsilon}_{\ell}}}]
&\leq
c_\lambda \, { N_\ell^{-1/\lambda} \, V_{\ell}},
\end{align}
for some constant $c_\lambda > 0$ with $1/\lambda > 0$ the rate of convergence of the randomized lattice rule. Assumption~\eqref{A4} corresponds to assumption (M2) in~\cite[Theorem~1]{kuo2015multilevel}.

We proceed as in~\secref{sec:mlmc} and minimize the cost of the MLQMC estimator without recycling given the variance constraint, assuming the $N_\ell$ are continuous variables:
\begin{align}
&\underset{N_\ell}{\text{min}} \sum_{\ell=0}^{L} N_\ell \, c_\gamma \, h_\ell^{-\gamma} \nonumber\\
&\text{s.t.} \;\sum_{\ell=0}^{L} c_\lambda \, N_\ell^{-1/\lambda} \, c_\beta \, h_\ell^{\beta} \leq \epsilon^2/2.\nonumber
\end{align}
To this end, we consider the Lagrangian
\begin{equation}
\mathcal{L}(N_0,N_1,\ldots,N_L,\zeta) 
\coloneqq
c_\gamma \sum_{\ell=0}^{L} N_\ell \, h_\ell^{-\gamma} +
\zeta
\left(
c_\lambda c_\beta \sum_{\ell=0}^{L} N_\ell^{-1/\lambda} \, h_\ell^{\beta} - \epsilon^2/2
\right)
\nonumber
\end{equation}
where $\zeta$ is the Lagrange multiplier, and look for its stationary point. 
This leads to the first-order, necessary optimality conditions 
\begin{align}
\frac{\partial\mathcal{L}}{\partial N_\ell} &= c_\gamma h_\ell^{-\gamma} - c_\lambda c_\beta \frac{\zeta}{\lambda} N_\ell^{-1/\lambda-1} \, h_\ell^{\beta} = 0 \quad \text{for } \ell = 0,\ldots,L, \quad\text{and} \label{eq:firstopt} \\
\frac{\partial\mathcal{L}}{\partial \zeta} &= c_\lambda c_\beta\sum_{\ell=0}^{L} N_\ell^{-1/\lambda} \, h_\ell^{\beta} - \epsilon^2/2 = 0.\nonumber
\end{align}
Rearranging~\eqref{eq:firstopt}, we find that
\begin{equation}\label{eq:nopt2}
N_\ell = 
\left(
\frac{c_\lambda c_\beta}{c_\gamma}
\frac{\zeta}{\lambda}
h_\ell^{\beta+\gamma}
\right)^{\frac{\lambda}{\lambda+1}}.
\end{equation}

Note that this equation for $N_\ell$ is of limited practical use, because estimates for $\lambda$, and hence for the convergence rate   $1/\lambda$ of the MSE of the lattice rule, are very unreliable~\cite{kuo2015multilevel}. A simple algorithm that adaptively increases the number of samples $N_\ell$, but does not require knowledge of $\lambda$, will be presented in~\secref{sec:practical}.

\begin{proposition}\label{prop:cost_reduction}
The cost reduction factor of the Multigrid Multilevel (Quasi-)Monte Carlo algorithm using a geometric refinement $h_\ell = \varrho \, h_{\ell+1}$ is given by
\begin{align*}
\frac{
  \mathrm{cost}({\bar{\mathcal{Q}}}^\mathrm{QMC}_{L,\nbshifts,\mathrm{reuse}})
}{
  \mathrm{cost}({\bar{\mathcal{Q}}}^\mathrm{QMC}_{L,\nbshifts})
}
&=
1- \varrho^{-(\beta+\gamma) \lambda / (\lambda+1)}
,
\end{align*}
where $\beta$ is the rate of the decrease in variance and $\gamma$ the rate of the increase in cost per level $\ell$, see assumptions~\eqref{A2} and~\eqref{A3}, and $1/\lambda$ is the convergence rate of the randomized quasi-Monte Carlo method, see~\eqref{A4}, or $\lambda=1$ when using Monte Carlo.
\end{proposition}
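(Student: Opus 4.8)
The plan is to write both costs as geometric series over the level index and compare them termwise. First I would pin down the sample counts: combining the optimality relation~\eqref{eq:nopt2} with the geometric grid law~\eqref{eq:grids} (writing $h_\ell = \varrho^{-\ell} h_0$) gives $N_\ell = \kappa\, h_\ell^{\delta}$ with $\delta \coloneqq (\beta+\gamma)\lambda/(\lambda+1)$, where $\kappa$ collects the constants $c_\lambda, c_\beta, c_\gamma, \lambda$ and the Lagrange multiplier $\zeta$ fixed by the variance budget (and $\lambda = 1$ in the Monte Carlo case). The only feature of this that matters is that consecutive counts are in the fixed ratio $N_{\ell+1} = \varrho^{-\delta} N_\ell$. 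With $C_\ell \le c_\gamma h_\ell^{-\gamma}$ from~\eqref{A3}, the cost of the estimator without recycling is $\mathrm{cost}(\bar{\mathcal{Q}}^\mathrm{QMC}_{L,\nbshifts}) = \nbshifts\sum_{\ell=0}^{L} N_\ell C_\ell$.

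Next I would account for the recycling. By the FMG property recalled in~\secref{sec:intro_det}, a single Full Multigrid solve on grid level $k$ produces all of $F_0,\ldots,F_k$ at a combined cost of order $C_k$, and in~\eqref{eq:MGMLQMC} the sample ${\boldsymbol{y}}^{(n)}_{k,\ridx}$ is only ever used to evaluate the differences $F_\ell - F_{\ell-1}$ with $\ell \le k$. Hence it suffices to run, per shift, $\tilde N_k$ FMG solves on grid $k$, so $\mathrm{cost}(\bar{\mathcal{Q}}^\mathrm{QMC}_{L,\nbshifts,\mathrm{reuse}}) = \nbshifts\sum_{k=0}^{L}\tilde N_k C_k$. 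The block $\bar{\Upsilon}_{\ell,\mathrm{reuse}}$ (the shift-average of~\eqref{eq:Upsilon-reuse}) is built from $\sum_{i=\ell}^{L}\tilde N_i$ samples; matching its per-level variance, via~\eqref{A4} and neglecting the cross-covariances of~\eqref{eq:vardecomp2} exactly as in~\cite{kumar2017multigrid}, to that of the non-recycling estimator leads us to require $\sum_{i=\ell}^{L}\tilde N_i = N_\ell$ for $\ell = 0,\ldots,L$. Solving this upper-triangular system downward from $\ell = L$ yields $\tilde N_L = N_L$ and $\tilde N_\ell = N_\ell - N_{\ell+1}$ for $\ell < L$, which are nonnegative since $N_\ell$ decreases in $\ell$.

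Finally I would take the ratio. Substituting $N_{\ell+1} = \varrho^{-\delta}N_\ell$ gives $\tilde N_\ell = (1 - \varrho^{-\delta})N_\ell$ for every $\ell < L$, so
\[
  \sum_{k=0}^{L}\tilde N_k C_k = (1 - \varrho^{-\delta})\sum_{k=0}^{L} N_k C_k + \varrho^{-\delta} N_L C_L .
\]
Dividing by $\sum_{k=0}^{L} N_k C_k$ gives $1 - \varrho^{-\delta}$ plus a correction $\varrho^{-\delta}\,N_L C_L / \sum_k N_k C_k$ supported on the finest level only; this correction vanishes in the limit $\epsilon \to 0$ (equivalently $L \to \infty$ by~\eqref{A1}) whenever the total cost is dominated by the coarser levels, i.e.\ $\beta\lambda > \gamma$, and it is absorbed identically if one extends the geometric law by setting $N_{L+1} \coloneqq \varrho^{-\delta}N_L$. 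Either way the cost reduction factor is $1 - \varrho^{-\delta} = 1 - \varrho^{-(\beta+\gamma)\lambda/(\lambda+1)}$, and putting $\lambda = 1$ recovers the Monte Carlo statement.

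The step I expect to be the main obstacle is the variance-matching identity $\sum_{i\ge\ell}\tilde N_i = N_\ell$: it is only exact within the idealised allocation model that drops the correlation terms $\rho_{\ell\tau}$ in~\eqref{eq:vardecomp2} (the same model underlying~\cite{kumar2017multigrid}), so the proposition should be read in that sense; the remaining care is the finest-level boundary term, which, as noted, is of lower order under the mild regime hypothesis $\beta\lambda > \gamma$.
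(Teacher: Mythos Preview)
Your proposal is correct and follows essentially the same route as the paper: derive the geometric ratio $N_{\ell+1}/N_\ell=\varrho^{-\delta}$ from~\eqref{eq:nopt2}, observe that only $N_\ell-N_{\ell+1}$ fresh solves are needed at each level, and factor $(1-\varrho^{-\delta})$ out of the cost sum. You are in fact more careful than the paper on two points---you justify $\tilde N_\ell=N_\ell-N_{\ell+1}$ via the variance-matching system and you track the finest-level boundary term $\varrho^{-\delta}N_LC_L$, which the paper silently drops by applying $N'_\ell=(1-\varrho^{-\delta})N_\ell$ uniformly in~$\ell$.
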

\begin{proof}
Suppose the algorithm requires one to take $N_\ell$ samples on level~$\ell$. In the MG-MLQMC method, where samples from higher levels $\ell+1,\ldots,L$ are recycled, we already have $N_{\ell+1}$ samples available on level $\ell$. The remaining 
 \begin{equation}
 N'_\ell \coloneqq N_{\ell} - N_{\ell+1} \nonumber
 \end{equation}
samples still need to be computed. From~\eqref{eq:nopt2}, we find that
\begin{equation}
N_{\ell+1} = \left(\varrho^{-(\beta+\gamma)}\right)^{\frac{\lambda}{\lambda+1}} N_{\ell}, \nonumber
\end{equation}
and hence
\begin{equation}
N'_\ell = \left(1-\left(\varrho^{-(\beta+\gamma)}\right)^{\frac{\lambda}{\lambda+1}}\right) N_{\ell}. \nonumber
\end{equation}
Therefore,
\begin{align*}
\text{cost}({\bar{\mathcal{Q}}}^\text{QMC}_{L,\nbshifts,\text{reuse}})
&=
\sum_{\ell=0}^{L} R \,  N'_\ell \, C_{\ell}
\\
&=
\left(1-\left(\varrho^{-(\beta+\gamma)}\right)^{\frac{\lambda}{\lambda+1}}\right) \sum_{\ell=0}^{L} R \, N_{\ell} \, C_{\ell}
\\
&= 
\left(1-\left(\varrho^{-(\beta+\gamma)}\right)^{\frac{\lambda}{\lambda+1}}\right) \text{cost}({\bar{\mathcal{Q}}}^\text{QMC}_{L,\nbshifts})
.
\end{align*}
\end{proof}

The recycling of coarse samples is more efficient when the rates $\beta$ and $\gamma$ are small, corresponding to a slow decay of the number of samples $N_\ell$ as $\ell$ increases. This is consistent with the results in~\cite{kumar2017multigrid}. There is an additional effect in the cost reduction factor that comes from using QMC points. When using rank-1 lattice rules, $\lambda$ varies between $1$ (MC) and $1/2$ (optimal rank-1 lattice rule), hence $1/3 < \lambda/(\lambda+1) \leq 1/2$, where the lower bound is obtained for the lattice rule with the fastest convergence.

In conclusion, when the decay of the variance is slow, i.e., $\beta$ is small, because of a lack of smoothness in the problem, or when the QMC method has good performance, i.e., $1/\lambda$ is large, the sequence of $N_\ell$ given in~\eqref{eq:nopt2} will decrease slowly and the benefit of the sample recycling will be apparent. Conversely, if the decay of the variance is fast, or when the QMC method has bad performance, the sequence of $N_\ell$ will decrease very rapidly and there is not much to be gained by reusing samples across the different levels. This is indeed what is confirmed by our numerical results in~\secref{sec:numerical_results}.

\subsection{Practical algorithm}\label{sec:practical}

In \cite{giles2009multilevel}, a simple doubling algorithm is presented for MLQMC simulation. With a slight modification, this algorithm can also be used for MG-MLQMC simulation (see Algorithm~\ref{alg:alg}). First, let us define the analogue of~\eqref{eq:meanofqmcestimators},
\begin{equation}\label{eq:Upsilon-reuse-bar}
\bar{\Upsilon}_{\ell,\text{reuse}}
\coloneqq
\frac{1}{\nbshifts} \sum_{\ridx=1}^{\nbshifts} {\Upsilon}_{\ell,\ridx,\text{reuse}}.
\end{equation}
This is the approximation for the expected value of the difference $F_{\ell}-F_{\ell-1}$, using the MG-MLQMC estimator, averaged over all random shifts $\ridx=1,2,\ldots,\nbshifts$. The sample variance of $\bar{\Upsilon}_{\ell,\text{reuse}}$ with respect to the random shifts is a measure for the contribution of that level $\ell$ to the total variance of the estimator, i.e., define
\begin{equation}\label{eq:sample_variances}
\mathcal{V}_\ell
\coloneqq 
\frac{1}{\nbshifts(\nbshifts-1)}
\sum_{\ridx=1}^{\nbshifts}
\left(
{\Upsilon}_{\ell,\ridx,\text{reuse}} - \bar{\Upsilon}_{\ell,\text{reuse}}
\right)^2.
\end{equation}

The idea of the algorithm is to double the number of samples where the ratio of the sample variance $\mathcal{V}_\ell$ and the cost $C_{\ell}$, is largest, until the total variance of the estimator is smaller than the requested tolerance $\epsilon^2/2$. The main difference between the MLQMC algorithm and the MG-MLQMC algorithm in Algorithm~\ref{alg:alg} is on line~\ref{alg:line:FMGslove}, where the additional (free) coarse samples from the FMG solver are added.

The algorithm is level-adaptive, in the sense that extra levels are added to the estimator only if required to reach a certain accuracy. The bias of the estimator, defined in equation~\eqref{eq:MSE}, is computed as
\begin{equation}\label{eq:bias}
\text{Bias}\hspace{-2pt}\left({{\bar{\mathcal{Q}}}^\text{QMC}_{L,\nbshifts,\text{reuse}}}\right) \approx \frac{\left|\bar{\Upsilon}_{L,\text{reuse}}\right|}{\varrho^\alpha+1},
\end{equation}
using an estimate for~$\alpha$ obtained from a linear fit through the expected value of the differences, $\bar{\Upsilon}_{\ell,\text{reuse}}$, on all available levels, cf.~\eqref{A1}, see~\cite{giles2008multilevel}. The condition on $L$ on line~\ref{alg:line:condition} of Algorithm~\ref{alg:alg} ensures that enough levels are available to estimate the rate~$\alpha$.

\begin{algorithm}[t]\small
\begin{algorithmic}[1]
\Procedure{MG-MLQMC}{$\epsilon$}
\State set defaults $N_\ell =  N_\ell^\textrm{old} = 0$, $\Upsilon_{\ell,r,\textrm{reuse}} = \bar{\Upsilon}_{\ell,\text{reuse}} = 0$, $\mathcal{V}_\ell = +\infty$, $C_\ell = 0$ for all $\ell$ and $r$
\State{}
\Procedure{SampleAndSolve}{$\ell$} \Comment{FMG sample routine on level $\ell$}
\State start timer
\For{$\ridx = 1,\ldots,\nbshifts$}
\For{$n = N_\ell^\textrm{old},\ldots,N_\ell-1$}
\State construct random field based on $\boldsymbol{y}_{\ell,r}^{(n)}$
\State solve PDE using multigrid resulting in solutions for all levels $\tau=0,\ldots,\ell$ \label{alg:line:FMGslove}
\EndFor
\State update $\Upsilon_{\tau,r,\textrm{reuse}}$ using~\eqref{eq:Upsilon-reuse} for all $\tau=0,\ldots,\ell$
\EndFor
\State stop timer and update $C_\ell$ based on timing for $(N_\ell - N_\ell^\textrm{old}) R$ solves
\State update $\bar{\Upsilon}_{\tau,\text{reuse}}$ using~\eqref{eq:Upsilon-reuse-bar} for all $\tau=0,\ldots,\ell$
\State update $\mathcal{V}_\tau$ using~\eqref{eq:sample_variances} for all $\tau=0,\ldots,\ell$
\State set $N_\ell^\textrm{old} = N_\ell$
\EndProcedure
\State{}
\State set $L=-1$
\While{$L<2$ \textbf{or} bias estimate~\eqref{eq:bias} $ > \epsilon/\sqrt{2}$} \label{alg:line:condition}
\State set $L=L+1$
\While{variance estimate \eqref{eq:var_of_mgmlqmc} $> \epsilon^2/2$}
\State find $\ell \le L$ with maximum ratio $\mathcal{V}_\ell/C_{\ell}$ and set $N_\ell=\mathrm{max}(1, 2 N_\ell)$
\State \textbf{call} \textsc{SampleAndSolve}($\ell$)
\EndWhile
\EndWhile
\State{}
\EndProcedure
\end{algorithmic}
\caption{\label{alg:alg}MG-MLQMC simulation}
\end{algorithm}

\section{Full Multigrid}\label{sec:intro_det}

Before turning to some numerical experiments, we give some comments on the Full Multigrid method.  The basics of a Multigrid method for the iterative solution of discretizations of elliptic PDEs is the correction scheme. A two-grid correction scheme can be written as the following procedure:
\begin{enumerate}
\item Use an iterative method with $\mu_{1}$ iterations on ${A}_{\ell}{\boldsymbol{u}}_{\ell}={\boldsymbol{b}}_{\ell}$ to approximate ${\boldsymbol{u}}_{\ell}$.
\item Restrict the residual ${\boldsymbol{r}}_{\ell}={\boldsymbol{b}}_{\ell}-{A}_{\ell}{\boldsymbol{u}}_{\ell}$ to the coarse grid by ${\boldsymbol{r}}_{{\ell-1}}=R_{\ell}^{{\ell-1}}{\boldsymbol{r}}_{\ell}$.
\item Solve ${A}_{{\ell-1}}{\boldsymbol{e}}_{{\ell-1}}={\boldsymbol{r}}_{{\ell-1}}$.
\item Interpolate the error ${\boldsymbol{e}}_{{\ell-1}}$ back to the fine grid by ${\boldsymbol{e}}_{\ell}=I_{{\ell-1}}^{\ell}{\boldsymbol{e}}_{{\ell-1}}$.
\item Correct the fine-grid approximation by ${\boldsymbol{u}}_{\ell}={\boldsymbol{u}}_{\ell}+{\boldsymbol{e}}_{\ell}$.
\item Use an iterative method with $\mu_{2}$ iterations on ${A}_{\ell}{\boldsymbol{u}}_{\ell}={\boldsymbol{b}}_{\ell}$ to approximate ${\boldsymbol{u}}_{\ell}$.
\end{enumerate}
Here, ${A}_{\ell}$ is the original matrix that results from a discretization of a sample of~\eqref{eq:PDE} at level $\ell$. Matrices $R_{\ell}^{{\ell-1}}$ and $I_{{\ell-1}}^{\ell}$ are the intergrid operators, transferring quantities from the fine grid to the coarse grid and vice versa. The integers $\mu_{1}$ and $\mu_{2}$ are cycling parameters that control the number of iterations performed before and after the coarse grid correction. Typical values are $\mu_{1}=2$ and $\mu_{2}=1$. The iterative method used in step 1 and 6 is called a smoother, because it removes high frequency components from the error, see, e.g.,~\cite{hackbusch1994iterative}. A single application of the smoother is known as a relaxation step. 

A multilevel iterative process is eventually obtained by repeating this two-grid process of relaxation and coarsening on ever coarser grids. This is known as a Multigrid V-cycle. Details on the Multigrid method and why it works, can be found in, e.g., ~\cite{briggs2000multigrid,trottenberg2000multigrid,hackbusch2013multi}.

Multigrid methods come in many flavors. One notable example is Full Multigrid, which combines the correction scheme with nested iteration. Starting from the coarsest grid on level $\ell$, a good initial guess for the next grid on level $\ell+1$ is obtained by interpolation of the inexpensive-to-compute coarse-grid solution. After that, a number of multilevel correction cycles are performed. We denote this number by $\mu_{0}$. The process is then repeated until a good initial guess for an iterative method on the finest grid (level $L$) is obtained. As a result, we do not only solve the PDE on the finest grid, but we also get solutions on all coarser grids, as required. The sequence of levels visited, starting from the finest level, are visualized in~\figref{fig:FMG}. The discrete solutions ${\boldsymbol{u}}_{0}, {\boldsymbol{u}}_{1}, \ldots,{\boldsymbol{u}}_{\ell}$ that are eventually used in the estimator are encircled on the figure.

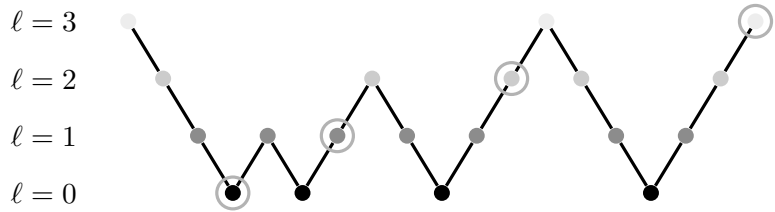
\begin{figure}
\centering
	\tikzexternalenable
	\tikzsetnextfilename{FMGgrids}%
	\begin{tikzpicture}
	\tikzset{gen/.style={node distance=6mm and 3mm,inner sep=0pt}}
	\tikzset{dot/.style={gen,circle,minimum size=4pt,text width=6pt}}
	\tikzset{black/.style={fill=black!100!white}}
	\tikzset{dark/.style={fill=black!45!white}}
	\tikzset{normal/.style={fill=black!20!white}}
	\tikzset{light/.style={fill=black!7!white}}
		\node[dot,light] (1) {};
		\node[dot,normal,below right=of 1] (2) {};
		\node[dot,dark,below right=of 2] (3) {};
		\node[dot,black,below right=of 3] (4) {};
		\node[draw=black!30!white,very thick,circle,minimum size=12pt] at (4) {};
		\node[dot,dark,above right=of 4] (5) {};
		\node[dot,black,below right=of 5] (6) {};
		\node[dot,dark,above right=of 6] (7) {};
		\node[draw=black!30!white,very thick,circle,minimum size=12pt] at (7) {};
		\node[dot,normal,above right=of 7] (8) {};
		\node[dot,dark,below right=of 8] (9) {};
		\node[dot,black,below right=of 9] (10) {};
		\node[dot,dark,above right=of 10] (11) {};
		\node[dot,normal,above right=of 11] (12) {};
		\node[draw=black!30!white,very thick,circle,minimum size=12pt] at (12) {};
		\node[dot,light,above right=of 12] (13) {};
		\node[dot,normal,below right=of 13] (14) {};
		\node[dot,dark,below right=of 14] (15) {};
		\node[dot,black,below right=of 15] (16) {};
		\node[dot,dark,above right=of 16] (17) {};
		\node[dot,normal,above right=of 17] (18) {};
		\node[dot,light,above right=of 18] (19) {};
		\node[draw=black!30!white,very thick,circle,minimum size=12pt] at (19) {};
		\node[left=of 1,anchor=center] (20) {$\levelcntr=3$};
		\node[anchor=center] at (20|-2) {$\levelcntr=2$};
		\node[anchor=center] at (20|-3) {$\levelcntr=1$};
		\node[anchor=center] at (20|-4) {$\levelcntr=0$};
		
		\begin{scope}[on background layer]
			\draw[very thick] (1) -- (2) -- (3) -- (4) -- (5) -- (6) -- (7) -- (8) -- (9) -- (10) -- (11) -- (12) -- (13) -- (14) -- (15) -- (16) -- (17) -- (18) -- (19);
		\end{scope}
\end{tikzpicture}%
	\tikzexternaldisable

\caption{\label{fig:FMG}Schematic representation of the sequence of grids visited in the FMG solver for $\mu_{0}=1$ and $\ell=3$. The coarse solutions at the end of every V-cycle, encircled on the figure, are eventually reused in the MLMC estimator.}
\end{figure}

A large number of efficient (black-box) multigrid solvers can be found in the literature, see, e.g., \cite{ruge5algebraic,yang2002boomeramg}. Typically, these codes do not provide access to the solutions on coarser grid levels. However, a multigrid solver for elliptic problems is easy to construct.

\section{Random field generation using the KL expansion}\label{sec:KL}

In this section, we focus on how to obtain a sample of the diffusion coefficient in~\eqref{eq:PDE}, represented as the random field $a(\bx,\boldsymbol{y})$, given the set of parameters~$\boldsymbol{y}$. We assume the diffusion coefficient is derived from a Gaussian random field $z(\bx,\boldsymbol{y})$, that is fully characterized by its mean $\bar{z}(\bx)$ and covariance function $C(\bx,\bx')$. In our numerical experiments later, we will use a so-called lognormal random field $a(\bx,\boldsymbol{y}) = \exp(z(\bx,\boldsymbol{y}))$, see~\cite{cliffe2011multilevel}. This ensures that equation~\eqref{eq:PDE} has physical meaning, i.e., the diffusion coefficient remains positive for all samples.

The KL expansion is a common approach to represent $z(\bx,\boldsymbol{y})$, see, e.g.,~\cite{lemaitre2010spectral}. The idea is to express the random field via a sequence of independent random variables $\boldsymbol{y}$ in the following way:
\begin{equation}\label{eq:KL}
z(\bx,\boldsymbol{y})
=
\sum_{j=1}^{\infty}
y_j
\sqrt{\theta_j} 
\psi_j
(
\bx
).
\end{equation}
The eigenvalues $\theta_j$ and eigenfunctions $\psi_j(\bx)$ are the solutions of the eigenvalue problem
\begin{equation}\label{eq:inteq}
\int_D C(\bx,\bx') \psi_j(\bx') \mathrm{d}\bx'
=
\theta_j \psi_j(\bx).
\end{equation}
Truncating the series in~\eqref{eq:KL} at the $\stochd$-th term gives the truncated KL expansion
\begin{equation}\label{eq:truncatedKL}
z_\stochd(\bx,\boldsymbol{y})
=
\sum_{j=1}^\stochd
y_j
\sqrt{\theta_j} 
\psi_j
(
\bx
).
\end{equation}

In the special case of Gaussian random fields, it can be shown that the random parameters $\boldsymbol{y}$ in~\eqref{eq:KL} are i.i.d. and standard normally distributed.

It is well-known that the KL expansion is optimal in the mean square sense, i.e., when truncated after a finite number of terms $\stochd$, the resulting approximation $z_\stochd(\bx,\boldsymbol{y})$ minimizes the MSE, see for instance~\cite{ghanem1991stochastic}. The mean square truncation error decreases monotonically to zero with the number of terms in the expansion, at a rate that depends on the covariance function $C(\bx,\bx')$.

If the eigenpairs $(\theta_j,\psi_j)$ in the KL expansion are not available, we need to approximate them using a suitable numerical scheme. One possibility is to perform collocation on~\eqref{eq:inteq}, i.e., solve
\begin{equation}\label{eq:inteqcollocation}
\int_D C(\bx_k,\bx') \, \psi_j(\bx') \, \mathrm{d}\bx'
=
\theta_j \, \psi_j(\bx_k)
,
\quad
k = 1,2,\ldots,M
,
\end{equation}
in some well-chosen integration points $\bx_k$.
In the Nystr\"om method~\cite{atkinson2009numerical}, the integral in equation \eqref{eq:inteqcollocation} is approximated by a suitable numerical integration scheme which uses the collocation points as quadrature nodes:
\begin{equation}\label{eq:discrinteq}
\sum_{q=1}^M w_q \, C(\bx_k,\bx_q) \, \tilde{\psi}_j(\bx_q)
=
\tilde{\theta}_j \tilde{\psi}_j(\bx_k)
,
\quad
k = 1,2,\ldots,M
.
\end{equation}
In matrix notation,
\begin{equation}\label{eq:matrixequation}
\Sigma W\tilde{\Psi}_j = \tilde{\theta}_j\tilde{\Psi}_j,
\end{equation}
where $\Sigma$ is a symmetric positive semi-definite matrix with entries $\Sigma_{k,q}=C(\bx_k,\bx_q)$, $W$ is a diagonal matrix with the weights $w_q$ on the diagonal and $\tilde{\Psi}_j$ is a vector with entries $\tilde{\Psi}_{j,q} = \tilde{\psi}_j(\bx_q)$. In general, $\Sigma W$ is a dense and nonsymmetric matrix. However, the matrix eigenvalue problem~\eqref{eq:matrixequation} can be reformulated into an equivalent matrix eigenvalue problem
\begin{equation}
B \tilde{\Psi}_j^* = \tilde{\theta}_j\tilde{\Psi}_j^*, \nonumber
\end{equation}
where $\tilde{\Psi}_j^*=\sqrt{W}\tilde{\Psi}_j$, and $B=\sqrt{W}\Sigma\sqrt{W}$ is symmetric positive semi-definite. Hence, the eigenvalues $\tilde{\theta}_j$ are real and nonnegative and the eigenvectors $\tilde{\Psi}_j^*$ are orthogonal to each other.

Using~\eqref{eq:discrinteq}, we obtain the so-called Nystr\"om interpolation formula for the eigenfunctions $\tilde{\psi}_j(\bx)$, i.e.,
\begin{equation*}
\tilde{\psi}_j(\bx)
=
\frac{1}{\tilde{\theta}_j} \sum_{q=1}^{M} \sqrt{w_q} \, \tilde{\Psi}_{j,q}^* \, C(\bx,\bx_q)
,
\end{equation*}
with $\tilde{\Psi}_{j,q}^*$ the $q$-th element of the eigenvector $\tilde{\Psi}_j^*$. After a suitable normalization, these eigenvalues and eigenfunctions can be used as an approximate eigenpair in the KL expansion from equation~\eqref{eq:truncatedKL}.

\newcounter{i}
\newcounter{j}
\def\n{3}
\def\nCases{9}
\pgfmathsetmacro{\nPlusOne}{int(\n+1)}
\pgfmathsetmacro{\nCasesPlusOne}{int(\nCases+1)}
\newcommand{\border}{1mm}
\begin{figure}[t]
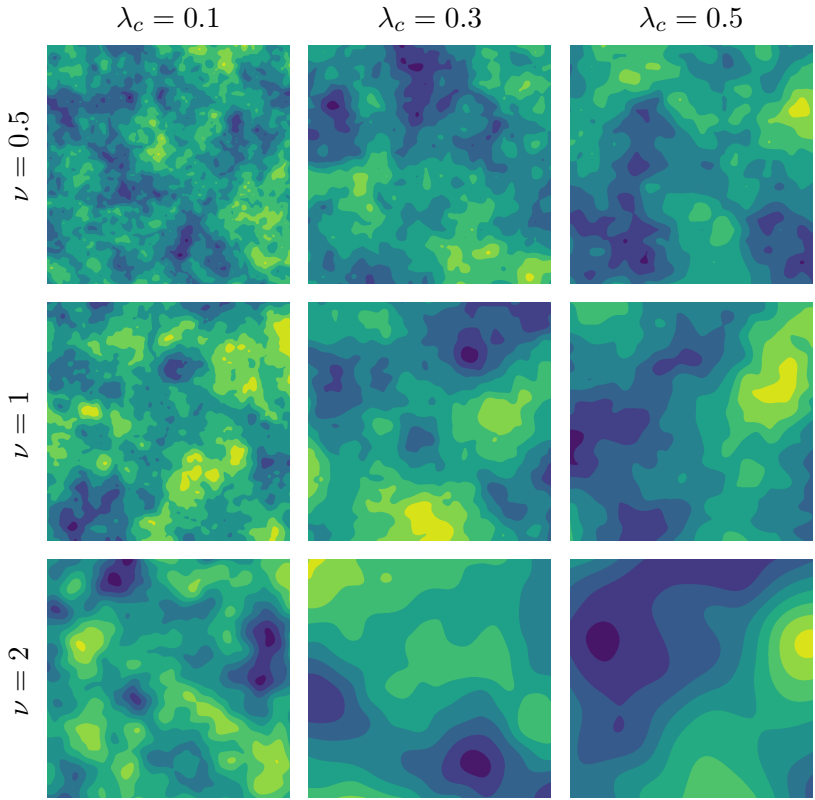

	\centering
	\setlength\tabcolsep{0pt}
	\renewcommand{\arraystretch}{1}
	\begin{tabular}{c@{\hskip 0pt}cccc}&%
	\forloop{j}{1}{\value{j} < \nPlusOne}{
		\pgfmathsetmacro{\idx}{int(\arabic{j}-1)}
		$\lambda_c = \pgfplotstablegetelem{\idx}{0}\of\params\pgfmathprintnumber[fixed]{\pgfplotsretval}$ &
	}\\[\border]
	\forloop{i}{1}{\value{i} < \n}{
		\pgfmathsetmacro{\idx}{int((\arabic{i}-1)*\n)}
		\rotatebox[origin=l]{90}{\hspace{0.06\textwidth}$\nu = \pgfplotstablegetelem{\idx}{1}\of\params\pgfmathprintnumber[fixed]{\pgfplotsretval}$} &
		\forloop{j}{1}{\value{j} < \nPlusOne}{
			\pgfmathsetmacro{\scndidx}{int((\arabic{i}-1)*\n+\arabic{j})}
			\includegraphics[width=0.2\textwidth]{figures/grf_\scndidx.png} &
		}
		\\[\border]
	}
	\pgfmathsetmacro{\idx}{int((\n-1)*\n)}
	\rotatebox[origin=l]{90}{\hspace{0.06\textwidth}$\nu = \pgfplotstablegetelem{\idx}{1}\of\params\pgfmathprintnumber[fixed]{\pgfplotsretval}$} &
	\forloop{j}{1}{\value{j} < \nPlusOne}{
		\pgfmathsetmacro{\scndidx}{int((\n-1)*\n+\arabic{j})}
		\includegraphics[width=0.2\textwidth]{figures/grf_\scndidx.png} &
	}
	\end{tabular}
	\caption{\label{fig:GRFs}Typical realizations of some Gaussian random fields for various combinations of the correlation length $\lambda_c$ and smoothness $\nu$ in Mat\'ern covariance function~\eqref{eq:matern}. These realizations are computed using the truncated KL expansion as outlined in \secref{sec:KL}, where the number of terms in the expansion is chosen according to~\tabref{tab:params}.}
\end{figure}

\section{Numerical Results}\label{sec:numerical_results}

In this section we perform numerical experiments on problem~\eqref{eq:PDE} that illustrate the power of the proposed algorithm. 

\subsection*{Discretization}
Consider the model elliptic PDE from~\eqref{eq:PDE} defined on a two-dimensional computational domain $D=[0,1]^2$, with source term $f(\bx)=1$ and homogenous boundary conditions of Dirichlet-type. We discretize the equation using finite differences on a hierarchy of regular grids with nodes
\begin{equation}
(x^{(i)}_1,x^{(j)}_2)= \left( \frac{i}{m_\ell},  \frac{j}{m_\ell} \right), \quad i,j = 0,\ldots,m_\ell, \nonumber
\end{equation}
where $m_\ell = 1/h_\ell$ and $h_\ell$ satisfies~\eqref{eq:grids} with $h_0=1/4$ and $\varrho=2$.

\subsection*{Modeling of uncertain parameters}
The diffusion coefficient $a(\bx,\boldsymbol{y})$ is modeled as a lognormal random field, i.e.,  $a(\bx,\boldsymbol{y})=\exp(z(\bx,\boldsymbol{y}))$, with $\bar{z}(\bx)=0$, and $C(\bx,\bx')$ is the Mat\'ern covariance function with correlation length $\lambda_c$ and smoothness $\nu$, defined as
\begin{equation}\label{eq:matern}
C(\bx,\bx') = \frac{2^{1-\nu}}{\Gamma(\nu)}\left(\sqrt{2\nu}{\rho}\right)^\nu K_\nu\left(\sqrt{2\nu}{\rho}\right), \quad \rho=\frac{\|\bx-\bx'\|_2}{\lambda_c}.
\end{equation}
In the following, we consider the parameter sets $\lambda_c=\{\pgfplotstablegetelem{0}{0}\of\params\pgfmathprintnumber[fixed]{\pgfplotsretval},\pgfplotstablegetelem{1}{0}\of\params\pgfmathprintnumber[fixed]{\pgfplotsretval},\pgfplotstablegetelem{2}{0}\of\params\pgfmathprintnumber[fixed]{\pgfplotsretval}\}$ and $\nu=\{\pgfplotstablegetelem{0}{1}\of\params\pgfmathprintnumber[fixed]{\pgfplotsretval},\pgfplotstablegetelem{4}{1}\of\params\pgfmathprintnumber[fixed]{\pgfplotsretval},\pgfplotstablegetelem{7}{1}\of\params\pgfmathprintnumber[fixed]{\pgfplotsretval}\}$.
A typical realization of the random field $z(\bx,\boldsymbol{y})$ for all parameter combinations of correlation length $\lambda_c$ and smoothness $\nu$ is shown in~\figref{fig:GRFs}.
Each of the random fields is represented using the truncated KL expansion in equation~\eqref{eq:truncatedKL}, where the number of terms in the expansion is chosen according to~\tabref{tab:params}. In particular, we used the identity
\begin{equation*}
\sum_{j=1}^{\infty} \theta_j
=
\int_D \mathrm{d}\bx = 1
\end{equation*}
to ensure that at least 99.8\% of the variability in the random field is captured by the first $\stochd$ terms, see~\cite{ernst2009efficient}.
The eigenvalues and eigenfunctions are approximated numerically using the Nystr\"om method outlined in~\secref{sec:KL}.

\begin{table}[t]
\centering
$\begin{array}{lcccc}\toprule
	&%
	\forloop{j}{1}{\value{j} < \nPlusOne}{
		\pgfmathsetmacro{\idx}{int(\arabic{j}-1)}
		\lambda_c = \pgfplotstablegetelem{\idx}{0}\of\params\pgfmathprintnumber[fixed]{\pgfplotsretval} &
	}\tabularnewline\midrule
	\forloop{i}{1}{\value{i} < \n}{
		\pgfmathsetmacro{\idx}{int((\arabic{i}-1)*\n)}
		\nu = \pgfplotstablegetelem{\idx}{1}\of\params\pgfmathprintnumber[fixed]{\pgfplotsretval} &
		\forloop{j}{1}{\value{j} < \nPlusOne}{
			\pgfmathsetmacro{\scndidx}{int((\arabic{i}-1)*\n+\arabic{j}-1)}
			\pgfplotstablegetelem{\scndidx}{2}\of\params\pgfmathprintnumber[fixed,1000 sep={}]{\pgfplotsretval} &
		}\\
	}
	\pgfmathsetmacro{\idx}{int((\arabic{i}-1)*\n)}
	\nu = \pgfplotstablegetelem{\idx}{1}\of\params\pgfmathprintnumber[fixed]{\pgfplotsretval} &
	\forloop{j}{1}{\value{j} < \nPlusOne}{
		\pgfmathsetmacro{\scndidx}{int((\arabic{i}-1)*\n+\arabic{j}-1)}
		\pgfplotstablegetelem{\scndidx}{2}\of\params\pgfmathprintnumber[fixed,1000 sep={}]{\pgfplotsretval} &
	}\\\bottomrule
\end{array}$
\caption{\label{tab:params}Number of terms in the KL expansion used to represent the different Gaussian random fields in the numerical experiments. As the smoothness $\nu$ in the Mat\'ern covariance increases, fewer terms are needed to accurately model the random fields using a KL expansion.}
\end{table}

\figref{fig:eig_decay} shows the decay of the eigenvalues $\tilde{\theta}_j$  for each parameter combination. Note that as the smoothness $\nu$ increases, the eigenvalues decay faster and fewer terms are required in the expansion. For decreasing correlation lengths $\lambda_c$, there is a growing pre-asymptotic phase where the KL eigenvalues do not decay. Note also that there is an end effect for the smallest eigenvalues, where the asymptotic decay rate is not preserved. Similar numerical results in case of a circulant embedding method and corresponding analysis can be found in~\cite{graham2017analysis}.

\subsection*{Quantity of interest}
As quantity of interest, we choose the value of $u(\bx,\cdot)$ at $\bx=(1/2,1/2)$.

\subsection*{Deterministic solver}
Every sample of~\eqref{eq:PDE} is solved using the FMG approach outlined in~\secref{sec:intro_det}. We used symmetric Gauss-Seidel (SGS)~\cite{saad2003iterative} as a smoother. The actual time needed to compute a solution on each level was used as an estimate for the cost $C_{\ell}$.

\subsection*{QMC method}
We used a randomly shifted rank-1 lattice rule with a generating vector computed using the fast CBC construction from the QMC4PDE website~\cite{QMC4PDE}. We used a total of $\nbshifts=20$ independent random shifts. Note that, since the total number of points in the lattice rule is not fixed a priori, the lattice rule is actually a lattice sequence, see~\cite{dick2013high} for details.

\begin{figure}
\centering
\setlength\figurewidth{0.29\textwidth}
\setlength\figureheight{0.29\textwidth}
\forloop{i}{1}{\value{i} < \n}{
	\ifthenelse{\arabic{i}>1}{
	\tikzexternalenable
	\tikzsetnextfilename{eigenvalues_\arabic{i}}%
	\begin{tikzpicture}[trim axis left,trim axis right]

\pgfmathsetmacro{\first}{int((\arabic{i}-1)*\n)}
\pgfmathsetmacro{\firstPlusOne}{int((\arabic{i}-1)*\n+1)}
\pgfmathsetmacro{\scnd}{int((\arabic{i}-1)*\n+1)}
\pgfmathsetmacro{\scndPlusOne}{int((\arabic{i}-1)*\n+2)}
\pgfmathsetmacro{\third}{int((\arabic{i}-1)*\n+2)}
\pgfmathsetmacro{\thirdPlusOne}{int((\arabic{i}-1)*\n+3)}

\begin{axis}[
width=\figurewidth,
height=\figureheight,
scale only axis,
xmode=log,
xmin=1e0,
xmax=1e4,
xminorticks=true,
xlabel={\small $\klcntr$},
every outer x axis line/.append style={white!15!black},
every x tick label/.append style={font=\color{white!15!black}\scriptsize},
every x label/.append style={font=\color{white!15!black}\scriptsize},
ymode=log,
ymin=1e-6,
ymax=1e0,
yminorticks=true,
ylabel={\small \ifthenelse{\arabic{i}>1}{}{$\theta_r$}},
every outer y axis line/.append style={white!15!black},
every y tick label/.append style={font=\color{white!15!black}\scriptsize},
every y label/.append style={font=\color{white!15!black}\scriptsize},
every y tick label/.append style={opacity=0},
axis background/.style={fill=white},
legend style={legend cell align=left,align=left,draw=none,font=\scriptsize,at={(0.97,0.97)},anchor=north east,fill=none},
title={$\nu = \pgfplotstablegetelem{\first}{1}\of\params\pgfmathprintnumber[fixed]{\pgfplotsretval}$}
]

\addplot [LineStyle1]
  table[y expr={\thisrowno{1}^2}]{plot_data/eigenvalues_\firstPlusOne.txt};
\addlegendentry{$\lambda_c=\pgfplotstablegetelem{\first}{0}\of\params\pgfmathprintnumber[fixed]{\pgfplotsretval}$};

\addplot [LineStyle2]
  table[y expr={\thisrowno{1}^2}]{plot_data/eigenvalues_\scndPlusOne.txt};
\addlegendentry{$\lambda_c=\pgfplotstablegetelem{\scnd}{0}\of\params\pgfmathprintnumber[fixed]{\pgfplotsretval}$};

\addplot [LineStyle3]
  table[y expr={\thisrowno{1}^2}]{plot_data/eigenvalues_\thirdPlusOne.txt};
\addlegendentry{$\lambda_c=\pgfplotstablegetelem{\third}{0}\of\params\pgfmathprintnumber[fixed]{\pgfplotsretval}$};

\end{axis}
\end{tikzpicture}%
	\tikzexternaldisable
\hspace{0.25cm}\hfill%
	}{
	\tikzexternalenable
	\tikzsetnextfilename{eigenvalues_\arabic{i}}%
	\begin{tikzpicture}[trim axis left,trim axis right]

\pgfmathsetmacro{\first}{int((\arabic{i}-1)*\n)}
\pgfmathsetmacro{\firstPlusOne}{int((\arabic{i}-1)*\n+1)}
\pgfmathsetmacro{\scnd}{int((\arabic{i}-1)*\n+1)}
\pgfmathsetmacro{\scndPlusOne}{int((\arabic{i}-1)*\n+2)}
\pgfmathsetmacro{\third}{int((\arabic{i}-1)*\n+2)}
\pgfmathsetmacro{\thirdPlusOne}{int((\arabic{i}-1)*\n+3)}

\begin{axis}[
width=\figurewidth,
height=\figureheight,
scale only axis,
xmode=log,
xmin=1e0,
xmax=1e4,
xminorticks=true,
xlabel={\small $\klcntr$},
every outer x axis line/.append style={white!15!black},
every x tick label/.append style={font=\color{white!15!black}\scriptsize},
every x label/.append style={font=\color{white!15!black}\scriptsize},
ymode=log,
ymin=1e-6,
ymax=1e0,
yminorticks=true,
ylabel={\small \ifthenelse{\arabic{i}>1}{}{$\Eigenvalue$}},
every outer y axis line/.append style={white!15!black},
every y tick label/.append style={font=\color{white!15!black}\scriptsize},
every y label/.append style={font=\color{white!15!black}\scriptsize},
axis background/.style={fill=white},
legend style={legend cell align=left,align=left,draw=none,font=\scriptsize,at={(0.97,0.97)},anchor=north east,fill=none},
title={$\nu = \pgfplotstablegetelem{\first}{1}\of\params\pgfmathprintnumber[fixed]{\pgfplotsretval}$}
]

\addplot [LineStyle1]
  table[y expr={\thisrowno{1}^2}]{plot_data/eigenvalues_\firstPlusOne.txt};
\addlegendentry{$\lambda_c=\pgfplotstablegetelem{\first}{0}\of\params\pgfmathprintnumber[fixed]{\pgfplotsretval}$};

\addplot [LineStyle2]
  table[y expr={\thisrowno{1}^2}]{plot_data/eigenvalues_\scndPlusOne.txt};
\addlegendentry{$\lambda_c=\pgfplotstablegetelem{\scnd}{0}\of\params\pgfmathprintnumber[fixed]{\pgfplotsretval}$};

\addplot [LineStyle3]
  table[y expr={\thisrowno{1}^2}]{plot_data/eigenvalues_\thirdPlusOne.txt};
\addlegendentry{$\lambda_c=\pgfplotstablegetelem{\third}{0}\of\params\pgfmathprintnumber[fixed]{\pgfplotsretval}$};

\end{axis}
\end{tikzpicture}%
	\tikzexternaldisable
\hfill%
	}%
}%
	\tikzexternalenable
	\tikzsetnextfilename{eigenvalues_\arabic{i}}%
	\begin{tikzpicture}[trim axis left,trim axis right]

\pgfmathsetmacro{\first}{int((\arabic{i}-1)*\n)}
\pgfmathsetmacro{\firstPlusOne}{int((\arabic{i}-1)*\n+1)}
\pgfmathsetmacro{\scnd}{int((\arabic{i}-1)*\n+1)}
\pgfmathsetmacro{\scndPlusOne}{int((\arabic{i}-1)*\n+2)}
\pgfmathsetmacro{\third}{int((\arabic{i}-1)*\n+2)}
\pgfmathsetmacro{\thirdPlusOne}{int((\arabic{i}-1)*\n+3)}

\begin{axis}[
width=\figurewidth,
height=\figureheight,
scale only axis,
xmode=log,
xmin=1e0,
xmax=1e4,
xminorticks=true,
xlabel={\small $\klcntr$},
every outer x axis line/.append style={white!15!black},
every x tick label/.append style={font=\color{white!15!black}\scriptsize},
every x label/.append style={font=\color{white!15!black}\scriptsize},
ymode=log,
ymin=1e-6,
ymax=1e0,
yminorticks=true,
ylabel={\small \ifthenelse{\arabic{i}>1}{}{$\theta_r$}},
every outer y axis line/.append style={white!15!black},
every y tick label/.append style={font=\color{white!15!black}\scriptsize},
every y label/.append style={font=\color{white!15!black}\scriptsize},
every y tick label/.append style={opacity=0},
axis background/.style={fill=white},
legend style={legend cell align=left,align=left,draw=none,font=\scriptsize,at={(0.97,0.97)},anchor=north east,fill=none},
title={$\nu = \pgfplotstablegetelem{\first}{1}\of\params\pgfmathprintnumber[fixed]{\pgfplotsretval}$}
]

\addplot [LineStyle1]
  table[y expr={\thisrowno{1}^2}]{plot_data/eigenvalues_\firstPlusOne.txt};
\addlegendentry{$\lambda_c=\pgfplotstablegetelem{\first}{0}\of\params\pgfmathprintnumber[fixed]{\pgfplotsretval}$};

\addplot [LineStyle2]
  table[y expr={\thisrowno{1}^2}]{plot_data/eigenvalues_\scndPlusOne.txt};
\addlegendentry{$\lambda_c=\pgfplotstablegetelem{\scnd}{0}\of\params\pgfmathprintnumber[fixed]{\pgfplotsretval}$};

\addplot [LineStyle3]
  table[y expr={\thisrowno{1}^2}]{plot_data/eigenvalues_\thirdPlusOne.txt};
\addlegendentry{$\lambda_c=\pgfplotstablegetelem{\third}{0}\of\params\pgfmathprintnumber[fixed]{\pgfplotsretval}$};

\end{axis}
\end{tikzpicture}%
	\tikzexternaldisable

\caption{\label{fig:eig_decay}Decay of the eigenvalues in the KL expansion from \secref{sec:KL} for various combinations of the correlation length $\lambda_c$ and smoothness $\nu$ in the Mat\'ern covariance function.}
\end{figure}
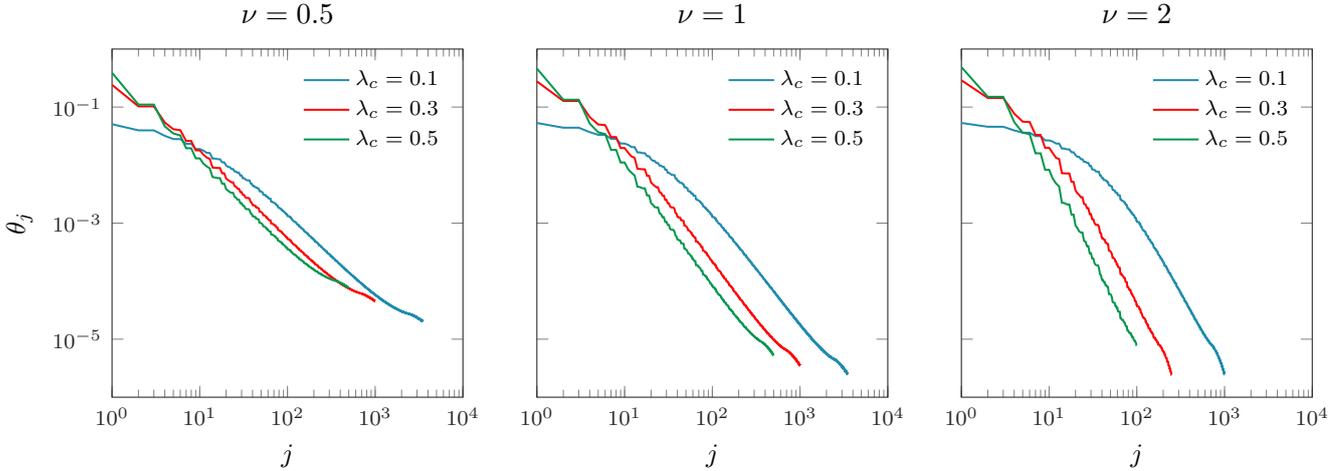

\subsection*{Computing environment}
All our simulations are performed on a workstation with 20 Intel Xeon E5645 processors running at 2.4 GHz and using a total of 48~GB of RAM. All samples are taken in parallel. Since the Monte Carlo sampling is embarrassingly parallel, we observed almost linear speedup with respect to the number of cores. For the QMC-based methods, we ran an independent estimator on each core, where the number of shifts equaled the number of computational cores. For details on work scheduling of MLMC samplers on parallel machines, we refer to~\cite{drzisga2017scheduling}.

The MG-MLQMC method is implemented within the framework of the Julia package {\texttt{Multil-} \texttt{evelEstimators.jl}}, see~\cite{robbe2018multilevelestimators}, and we used this software in all our experiments.

\subsection{Numerical rate determination}

We begin our numerical tests by determining the rates $\alpha$, $\beta$, $\gamma$ and $\lambda$ in assumptions \eqref{A1}--\eqref{A4}. 

\figref{fig:E} and \figref{fig:V} are standard figures used in MLMC literature. They plot the behavior of expected value and variance of the quantity of interest $F_{\ell}$ and the difference $F_{\ell}-F_{\ell-1}$ as a function of the level parameter $\ell$, for all parameter combinations listed in~\tabref{tab:params}. Observe that the estimated rate $\alpha$ in assumption~\eqref{A1} is approximately constant. The estimated rate $\beta$ in assumption~\eqref{A2} increases as the smoothness $\nu$ and correlation length $\lambda_c$ in the Mat\'ern covariance function increase. In addition, note that for $\lambda_c=0.1$, the graphs of $\text{V}[{F_{\ell}}]$ and $\text{V}[{F_{\ell}-F_{\ell-1}}]$ intersect between $\ell=1$ and $\ell=2$, indicating that the coarse level uses too few degrees of freedom. In this case, there is no benefit in including the coarsest mesh into the hierarchy. However, for the sake of uniformity, we will use the same mesh hierarchy for all test cases.

\figref{fig:MSE} shows the variance of the QMC estimator for the expected value of the differences $F_{\ell}-F_{\ell-1}$ as the number of samples $N_\ell$ is increased. Note that the numerically observed rate $\lambda$ from assumption~\eqref{A4} seems to decrease (that is, the lattice rule performs better) as the correlation length $\lambda_c$ increases. However, from our analysis in~\secref{sec:cost_anal}, and specifically Proposition~\ref{prop:cost_reduction}, it follows that this effect is negligible compared to the increase of the rate $\beta$. Observe also the variance reduction from level to level, i.e., the offset between the lines.

Finally, the rate $\gamma$ in assumption~\eqref{A3} is fixed by the FMG solver. We found numerically that $\gamma\approx 2.12$.

Based on the analysis from~\secref{sec:cost_anal}, we thus expect that the sample recycling is less efficient when
\begin{enumerate}
\item[(i)] the smoothness $\nu$ of the covariance function increases (i.e., moving from top to bottom in~\tabref{tab:params}), and 
\item[(ii)] the correlation length $\lambda_c$ of the covariance function increases (i.e., moving from left to right in~\tabref{tab:params}).
\end{enumerate}

\begin{figure}[p]
\centering
\setlength\figurewidth{0.3\textwidth}
\setlength\figureheight{0.3\textwidth}
\foreach \j in {1,...,\n}{
	\foreach \i in {1,...,\n}{
		\pgfmathsetmacro{\case}{int((\j-1)*\n+\i)}
		\ifthenelse{\i>1}{%
			\ifthenelse{\j<3}{%
	\tikzexternalenable
	\tikzsetnextfilename{plot_E_\case}%
	\begin{tikzpicture}[trim axis left,trim axis right]

\pgfmathsetmacro{\caseMinusOne}{int(\case-1)}

\pgfplotstableread[header=false]{plot_data/table_rates_E.txt}\rates

\begin{axis}[
width=\figurewidth,
height=\figureheight,
scale only axis,
xmin=0,
xmax=5,
xminorticks=true,
xtick={0,1,...,5},
every outer x axis line/.append style={white!15!black},
every x tick label/.append style={font=\color{white!15!black}\scriptsize},
every x label/.append style={font=\color{white!15!black}\scriptsize},
ymin=-20,
ymax=0,
yminorticks=true,
every y tick label/.append style={opacity=0},
every x tick label/.append style={opacity=0},
every outer y axis line/.append style={white!15!black},
every y tick label/.append style={font=\color{white!15!black}\scriptsize},
every y label/.append style={font=\color{white!15!black}\scriptsize},
axis background/.style={fill=white},
legend style={legend cell align=left,align=left,draw=none,font=\scriptsize,at={(0.03,0.03)},anchor=south west,fill=none}
]

\addplot [DottedLineStyle2]
  table[]{plot_data/case_\case_rates_E.txt};

\addplot [DashedDottedLineStyle2]
  table[]{plot_data/case_\case_rates_dE.txt};

\node[anchor=south west] at (rel axis cs:0.03,0.03) {\small $\alpha=\pgfplotstablegetelem{\caseMinusOne}{0}\of\rates\pgfmathprintnumber[fixed]{\pgfplotsretval}$};

\end{axis}
\end{tikzpicture}%
	\tikzexternaldisable
\hfill%
			}{%
	\tikzexternalenable
	\tikzsetnextfilename{plot_E_\case}%
	\begin{tikzpicture}[trim axis left,trim axis right]

\pgfmathsetmacro{\caseMinusOne}{int(\case-1)}

\pgfplotstableread[header=false]{plot_data/table_rates_E.txt}\rates

\begin{axis}[
width=\figurewidth,
height=\figureheight,
scale only axis,
xmin=0,
xmax=5,
xminorticks=true,
xlabel={\small $\levelcntr$},
xtick={0,1,...,5},
every outer x axis line/.append style={white!15!black},
every x tick label/.append style={font=\color{white!15!black}\scriptsize},
every x label/.append style={font=\color{white!15!black}\scriptsize},
ymin=-20,
ymax=0,
yminorticks=true,
every y tick label/.append style={opacity=0},
every outer y axis line/.append style={white!15!black},
every y tick label/.append style={font=\color{white!15!black}\scriptsize},
every y label/.append style={font=\color{white!15!black}\scriptsize},
axis background/.style={fill=white},
legend style={legend cell align=left,align=left,draw=none,font=\scriptsize,at={(0.03,0.03)},anchor=south west,fill=none}
]

\addplot [DottedLineStyle2]
  table[]{plot_data/case_\case_rates_E.txt};

\addplot [DashedDottedLineStyle2]
  table[]{plot_data/case_\case_rates_dE.txt};

\node[anchor=south west] at (rel axis cs:0.03,0.03) {\small $\alpha=\pgfplotstablegetelem{\caseMinusOne}{0}\of\rates\pgfmathprintnumber[fixed]{\pgfplotsretval}$};

\end{axis}
\end{tikzpicture}%
	\tikzexternaldisable
\hfill%
			}
		}{%
			\ifthenelse{\j<3}{%
	\tikzexternalenable
	\tikzsetnextfilename{plot_E_\case}%
	\begin{tikzpicture}[trim axis left,trim axis right]

\pgfmathsetmacro{\caseMinusOne}{int(\case-1)}

\pgfplotstableread[header=false]{plot_data/table_rates_E.txt}\rates

\begin{axis}[
width=\figurewidth,
height=\figureheight,
scale only axis,
xmin=0,
xmax=5,
xminorticks=true,
xtick={0,1,...,5},
every outer x axis line/.append style={white!15!black},
every x tick label/.append style={font=\color{white!15!black}\scriptsize},
every x label/.append style={font=\color{white!15!black}\scriptsize},
ymin=-20,
ymax=0,
yminorticks=true,
ylabel={\small $\log_2(|\ExpOf{\;\cdot\;}|)$},
every x tick label/.append style={opacity=0},
every outer y axis line/.append style={white!15!black},
every y tick label/.append style={font=\color{white!15!black}\scriptsize},
every y label/.append style={font=\color{white!15!black}\scriptsize},
axis background/.style={fill=white},
legend style={legend cell align=left,align=left,draw=none,font=\scriptsize,at={(0.03,0.03)},anchor=south west,fill=none}
]

\addplot [DottedLineStyle2]
  table[]{plot_data/case_\case_rates_E.txt};

\addplot [DashedDottedLineStyle2]
  table[]{plot_data/case_\case_rates_dE.txt};

\node[anchor=south west] at (rel axis cs:0.03,0.03) {\small $\alpha=\pgfplotstablegetelem{\caseMinusOne}{0}\of\rates\pgfmathprintnumber[fixed,zerofill]{\pgfplotsretval}$};

\end{axis}
\end{tikzpicture}%
	\tikzexternaldisable
\hfill%
			}{%
	\tikzexternalenable
	\tikzsetnextfilename{plot_E_\case}%
	\begin{tikzpicture}[trim axis left,trim axis right]

\pgfmathsetmacro{\caseMinusOne}{int(\case-1)}

\pgfplotstableread[header=false]{plot_data/table_rates_E.txt}\rates

\begin{axis}[
width=\figurewidth,
height=\figureheight,
scale only axis,
xmin=0,
xmax=5,
xminorticks=true,
xlabel={\small $\levelcntr$},
xtick={0,1,...,5},
every outer x axis line/.append style={white!15!black},
every x tick label/.append style={font=\color{white!15!black}\scriptsize},
every x label/.append style={font=\color{white!15!black}\scriptsize},
ymin=-20,
ymax=0,
yminorticks=true,
ylabel={\small $\log_2(|\ExpOf{\;\cdot\;}|)$},
every outer y axis line/.append style={white!15!black},
every y tick label/.append style={font=\color{white!15!black}\scriptsize},
every y label/.append style={font=\color{white!15!black}\scriptsize},
axis background/.style={fill=white},
legend style={legend cell align=left,align=left,draw=none,font=\scriptsize,at={(0.03,0.03)},anchor=south west,fill=none}
]

\addplot [DottedLineStyle2]
  table[]{plot_data/case_\case_rates_E.txt};
\label{E};

\addplot [DashedDottedLineStyle2]
  table[]{plot_data/case_\case_rates_dE.txt};
\label{dE};

\node[anchor=south west] at (rel axis cs:0.03,0.03) {\small $\alpha=\pgfplotstablegetelem{\caseMinusOne}{0}\of\rates\pgfmathprintnumber[fixed]{\pgfplotsretval}$};

\end{axis}
\end{tikzpicture}%
	\tikzexternaldisable
\hfill%
			}
		}%
	}\\%
}%
\caption{\label{fig:E}Expected values $\text{E}[{F_{\ell}}]$ (\ref{E}) and $\text{E}[{F_{\ell}-F_{\ell-1}}]$ (\ref{dE}) for $\lambda_c=0.1$ (left column), $\lambda_c=0.3$ (middle column), $\lambda_c=0.5$ (right column) and $\nu=0.5$ (top row), $\nu=1$ (middle row), $\nu=2$ (bottom row). The estimated rate $\alpha$ corresponds to the rate in assumption~\eqref{A1}.}
\end{figure}
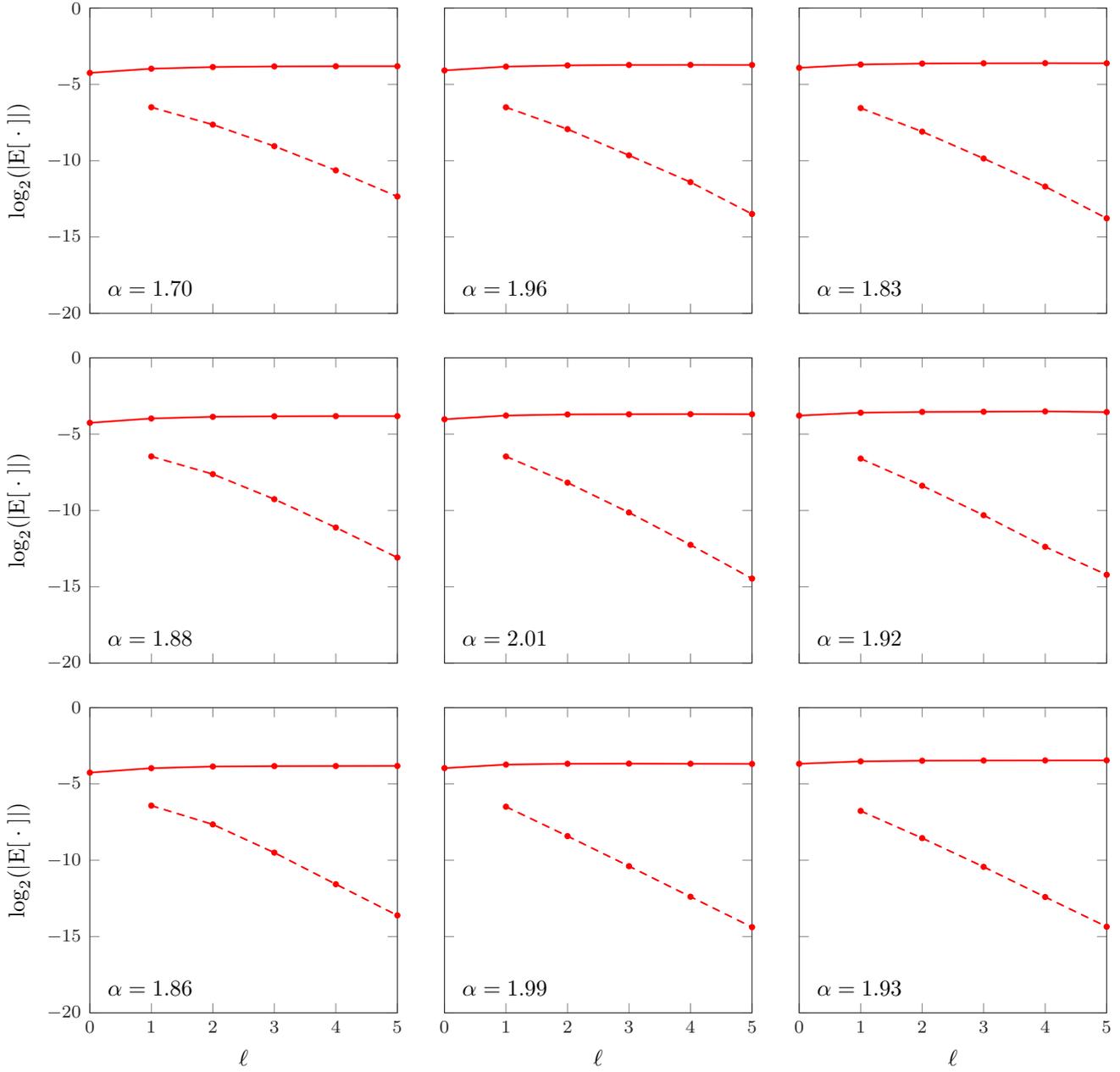

\begin{figure}[p]
\flushleft
\setlength\figurewidth{0.3\textwidth}
\setlength\figureheight{0.3\textwidth}
\foreach \j in {1,...,\n}{
	\foreach \i in {1,...,\n}{
		\pgfmathsetmacro{\case}{int((\j-1)*\n+\i)}
		\ifthenelse{\i>1}{%
			\ifthenelse{\j<3}{%
	\tikzexternalenable
	\tikzsetnextfilename{plot_V_\case}%
	\begin{tikzpicture}[trim axis left,trim axis right]

\pgfmathsetmacro{\caseMinusOne}{int(\case-1)}
\pgfmathsetmacro{\MyYMin}{(\case<(\n+1))?-20:((\case<(2*\n+1))?-25:-35)}
\pgfplotstableread[header=false]{plot_data/table_rates_V.txt}\rates

\begin{axis}[
width=\figurewidth,
height=\figureheight,
scale only axis,
xmin=0,
xmax=5,
xminorticks=true,
xtick={0,1,...,5},
every outer x axis line/.append style={white!15!black},
every x tick label/.append style={font=\color{white!15!black}\scriptsize},
every x label/.append style={font=\color{white!15!black}\scriptsize},
ymin=\MyYMin,
ymax=-5,
yminorticks=true,
every y tick label/.append style={opacity=0},
every x tick label/.append style={opacity=0},
every outer y axis line/.append style={white!15!black},
every y tick label/.append style={font=\color{white!15!black}\scriptsize},
every y label/.append style={font=\color{white!15!black}\scriptsize},
axis background/.style={fill=white},
legend style={legend cell align=left,align=left,draw=none,font=\scriptsize,at={(0.03,0.03)},anchor=south west,fill=none}
]

\addplot [DottedLineStyle1]
  table[]{plot_data/case_\case_rates_V.txt};

\addplot [DashedDottedLineStyle1]
  table[]{plot_data/case_\case_rates_dV.txt};

\node[anchor=south west] at (rel axis cs:0.03,0.03) {\small $\beta=\pgfplotstablegetelem{\caseMinusOne}{0}\of\rates\pgfmathprintnumber[fixed,zerofill]{\pgfplotsretval}$};

\end{axis}
\end{tikzpicture}%
	\tikzexternaldisable
\hfill%
			}{%
	\tikzexternalenable
	\tikzsetnextfilename{plot_V_\case}%
	\begin{tikzpicture}[trim axis left,trim axis right]

\pgfmathsetmacro{\caseMinusOne}{int(\case-1)}
\pgfmathsetmacro{\MyYMin}{(\case<(\n+1))?-20:(\case<(2*\n+1))?-25:-30}
\pgfplotstableread[header=false]{plot_data/table_rates_V.txt}\rates

\begin{axis}[
width=\figurewidth,
height=\figureheight,
scale only axis,
xmin=0,
xmax=5,
xminorticks=true,
xlabel={\small $\levelcntr$},
xtick={0,1,...,5},
ytick={-35,-30,...,-5},
every outer x axis line/.append style={white!15!black},
every x tick label/.append style={font=\color{white!15!black}\scriptsize},
every x label/.append style={font=\color{white!15!black}\scriptsize},
ymin=\MyYMin,
ymax=-5,
yminorticks=true,
every y tick label/.append style={opacity=0},
every outer y axis line/.append style={white!15!black},
every y tick label/.append style={font=\color{white!15!black}\scriptsize},
every y label/.append style={font=\color{white!15!black}\scriptsize},
axis background/.style={fill=white},
legend style={legend cell align=left,align=left,draw=none,font=\scriptsize,at={(0.03,0.03)},anchor=south west,fill=none}
]

\addplot [DottedLineStyle1]
  table[]{plot_data/case_\case_rates_V.txt};

\addplot [DashedDottedLineStyle1]
  table[]{plot_data/case_\case_rates_dV.txt};

\node[anchor=south west] at (rel axis cs:0.03,0.03) {\small $\beta=\pgfplotstablegetelem{\caseMinusOne}{0}\of\rates\pgfmathprintnumber[fixed,zerofill]{\pgfplotsretval}$};

\end{axis}
\end{tikzpicture}%
	\tikzexternaldisable
\hfill%
			}
		}{%
			\ifthenelse{\j<3}{%
	\tikzexternalenable
	\tikzsetnextfilename{plot_V_\case}%
	\begin{tikzpicture}[trim axis left,trim axis right]

\pgfmathsetmacro{\caseMinusOne}{int(\case-1)}
\pgfmathsetmacro{\MyYMin}{(\case<(\n+1))?-20:((\case<(2*\n+1))?-25:-30)}
\pgfplotstableread[header=false]{plot_data/table_rates_V.txt}\rates

\begin{axis}[
width=\figurewidth,
height=\figureheight,
scale only axis,
xmin=0,
xmax=5,
xminorticks=true,
every x tick label/.append style={opacity=0},
xtick={0,1,...,5},
every outer x axis line/.append style={white!15!black},
every x tick label/.append style={font=\color{white!15!black}\scriptsize},
every x label/.append style={font=\color{white!15!black}\scriptsize},
ymin=\MyYMin,
ymax=-5,
yminorticks=true,
ylabel={\small $\log_2(\VarOf{\;\cdot\;})$},
every outer y axis line/.append style={white!15!black},
every y tick label/.append style={font=\color{white!15!black}\scriptsize},
every y label/.append style={font=\color{white!15!black}\scriptsize},
axis background/.style={fill=white},
legend style={legend cell align=left,align=left,draw=none,font=\scriptsize,at={(0.03,0.03)},anchor=south west,fill=none}
]

\addplot [DottedLineStyle1]
  table[]{plot_data/case_\case_rates_V.txt};

\addplot [DashedDottedLineStyle1]
  table[]{plot_data/case_\case_rates_dV.txt};

\node[anchor=south west] at (rel axis cs:0.03,0.03) {\small $\beta=\pgfplotstablegetelem{\caseMinusOne}{0}\of\rates\pgfmathprintnumber[fixed,zerofill]{\pgfplotsretval}$};

\end{axis}
\end{tikzpicture}%
	\tikzexternaldisable
\hfill%
			}{%
	\tikzexternalenable
	\tikzsetnextfilename{plot_V_\case}%
	\begin{tikzpicture}[trim axis left,trim axis right]

\pgfmathsetmacro{\caseMinusOne}{int(\case-1)}
\pgfmathsetmacro{\MyYMin}{(\case<(\n+1))?-20:(\case<(2*\n+1))?-25:-30}
\pgfplotstableread[header=false]{plot_data/table_rates_V.txt}\rates

\begin{axis}[
width=\figurewidth,
height=\figureheight,
scale only axis,
xmin=0,
xmax=5,
xminorticks=true,
xlabel={\small $\levelcntr$},
xtick={0,1,...,5},
every outer x axis line/.append style={white!15!black},
every x tick label/.append style={font=\color{white!15!black}\scriptsize},
every x label/.append style={font=\color{white!15!black}\scriptsize},
ymin=\MyYMin,
ymax=-5,
ytick={-35,-30,...,-5},
yminorticks=true,
ylabel={\small $\log_2(\VarOf{\;\cdot\;})$},
every outer y axis line/.append style={white!15!black},
every y tick label/.append style={font=\color{white!15!black}\scriptsize},
every y label/.append style={font=\color{white!15!black}\scriptsize},
axis background/.style={fill=white},
legend style={legend cell align=left,align=left,draw=none,font=\scriptsize,at={(0.03,0.03)},anchor=south west,fill=none}
]

\addplot [DottedLineStyle1]
  table[]{plot_data/case_\case_rates_V.txt};
\label{V};				

\addplot [DashedDottedLineStyle1]
  table[]{plot_data/case_\case_rates_dV.txt};
\label{dV};

\node[anchor=south west] at (rel axis cs:0.03,0.03) {\small $\beta=\pgfplotstablegetelem{\caseMinusOne}{0}\of\rates\pgfmathprintnumber[fixed,zerofill]{\pgfplotsretval}$};

\end{axis}
\end{tikzpicture}%
	\tikzexternaldisable
\hfill%
			}
		}%
	}\\%
}%
\caption{\label{fig:V}Variances $\text{V}[{F_{\ell}}]$ (\ref{V}) and $\text{V}[{F_{\ell}-F_{\ell-1}}]$ (\ref{dV}) for $\lambda_c=0.1$ (left column), $\lambda_c=0.3$ (middle column), $\lambda_c=0.5$ (right column) and $\nu=0.5$ (top row), $\nu=1$ (middle row), $\nu=2$ (bottom row). The estimated rate $\beta$ corresponds to the rate in assumption~\eqref{A2}.}
\end{figure}

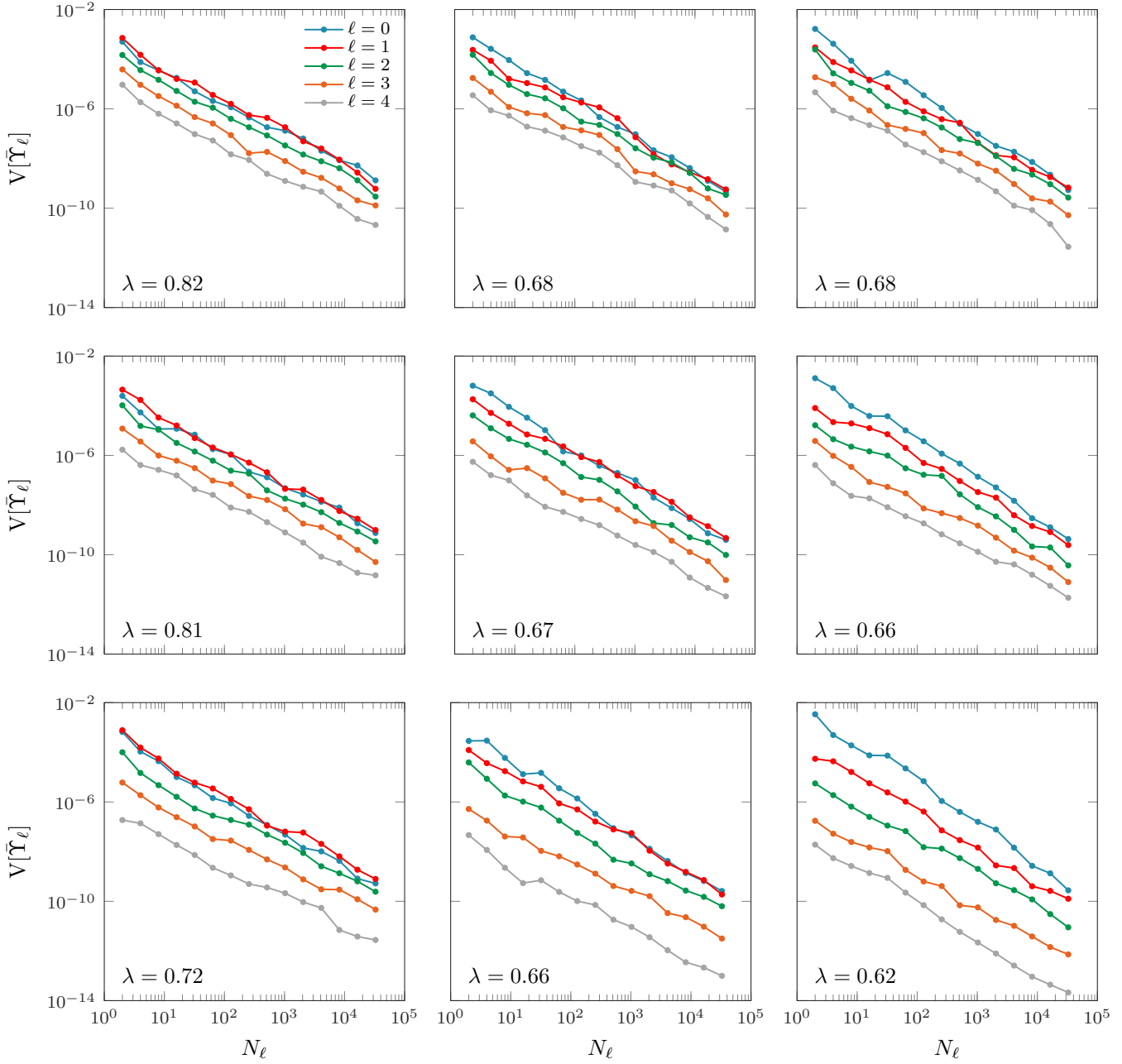
\begin{figure}[p]
\flushleft
\setlength\figurewidth{0.3\textwidth}
\setlength\figureheight{0.3\textwidth}
\foreach \j in {1,...,\n}{
	\foreach \i in {1,...,\n}{
		\pgfmathsetmacro{\case}{int((\j-1)*\n+\i)}
		\ifthenelse{\i>1}{%
			\ifthenelse{\j<3}{%
	\tikzexternalenable
	\tikzsetnextfilename{mse_\case}%
	\begin{tikzpicture}[trim axis left,trim axis right]

\pgfmathsetmacro{\caseMinusOne}{int(\case-1)}

\pgfplotstableread[header=false]{plot_data/table_rates_mse.txt}\rates

\begin{axis}[
width=\figurewidth,
height=\figureheight,
scale only axis,
xmode=log,
xmin=1,
xmax=1e5,
xminorticks=true,
every outer x axis line/.append style={white!15!black},
every x tick label/.append style={font=\color{white!15!black}\scriptsize},
every x label/.append style={font=\color{white!15!black}\scriptsize},
ymode=log,
ymin=1e-14,
ymax=1e-2,
yminorticks=true,
every outer y axis line/.append style={white!15!black},
every y tick label/.append style={font=\color{white!15!black}\scriptsize},
every y label/.append style={font=\color{white!15!black}\scriptsize},
axis background/.style={fill=white},
every y tick label/.append style={opacity=0},
every x tick label/.append style={opacity=0},
legend style={legend cell align=left,align=left,draw=none,font=\scriptsize,at={(0.97,0.97)},anchor=north east,fill=none},
]

\addplot [DottedLineStyle1]
  table[]{plot_data/mse_\case_0.txt};

\addplot [DottedLineStyle2]
  table[]{plot_data/mse_\case_1.txt};

\addplot [DottedLineStyle3]
  table[]{plot_data/mse_\case_2.txt};

\addplot [DottedLineStyle4]
  table[]{plot_data/mse_\case_3.txt};

\addplot [DottedLineStyle5]
  table[]{plot_data/mse_\case_4.txt};

\node[anchor=south west] at (rel axis cs:0.03,0.03) {\small $\lambda=\pgfplotstablegetelem{\caseMinusOne}{0}\of\rates\pgfmathprintnumber[fixed,zerofill]{\pgfplotsretval}$};

\end{axis}
\end{tikzpicture}%
	\tikzexternaldisable
\hfill%
			}{%
	\tikzexternalenable
	\tikzsetnextfilename{mse_\case}%
	\begin{tikzpicture}[trim axis left,trim axis right]

\pgfmathsetmacro{\caseMinusOne}{int(\case-1)}

\pgfplotstableread[header=false]{plot_data/table_rates_mse.txt}\rates

\begin{axis}[
width=\figurewidth,
height=\figureheight,
scale only axis,
xmode=log,
xmin=1,
xmax=1e5,
xminorticks=true,
xlabel={\small $N_\ell$},
every outer x axis line/.append style={white!15!black},
every x tick label/.append style={font=\color{white!15!black}\scriptsize},
every x label/.append style={font=\color{white!15!black}\scriptsize},
ymode=log,
ymin=1e-14,
ymax=1e-2,
yminorticks=true,
every outer y axis line/.append style={white!15!black},
every y tick label/.append style={font=\color{white!15!black}\scriptsize},
every y label/.append style={font=\color{white!15!black}\scriptsize},
axis background/.style={fill=white},
every y tick label/.append style={opacity=0},
legend style={legend cell align=left,align=left,draw=none,font=\scriptsize,at={(0.97,0.97)},anchor=north east,fill=none},
]

\addplot [DottedLineStyle1]
  table[]{plot_data/mse_\case_0.txt};

\addplot [DottedLineStyle2]
  table[]{plot_data/mse_\case_1.txt};

\addplot [DottedLineStyle3]
  table[]{plot_data/mse_\case_2.txt};

\addplot [DottedLineStyle4]
  table[]{plot_data/mse_\case_3.txt};

\addplot [DottedLineStyle5]
  table[]{plot_data/mse_\case_4.txt};

\node[anchor=south west] at (rel axis cs:0.03,0.03) {\small $\lambda=\pgfplotstablegetelem{\caseMinusOne}{0}\of\rates\pgfmathprintnumber[fixed,zerofill]{\pgfplotsretval}$};

\end{axis}
\end{tikzpicture}%
	\tikzexternaldisable
\hfill%
			}
		}{%
			\ifthenelse{\j<3}{%
				\ifthenelse{\j<2}{
	\tikzexternalenable
	\tikzsetnextfilename{mse_\case}%
	\begin{tikzpicture}[trim axis left,trim axis right]

\pgfmathsetmacro{\caseMinusOne}{int(\case-1)}
\pgfplotstableread[header=false]{plot_data/table_rates_mse.txt}\rates

\begin{axis}[
width=\figurewidth,
height=\figureheight,
scale only axis,
xmode=log,
xmin=1,
xmax=1e5,
xminorticks=true,
every outer x axis line/.append style={white!15!black},
every x tick label/.append style={font=\color{white!15!black}\scriptsize},
every x label/.append style={font=\color{white!15!black}\scriptsize},
ymode=log,
ymin=1e-14,
ymax=1e-2,
yminorticks=true,
ylabel={\small $\VarOf{{\MeanQMCEstimatorWithoutSampleReuse}}$},
every outer y axis line/.append style={white!15!black},
every y tick label/.append style={font=\color{white!15!black}\scriptsize},
every y label/.append style={font=\color{white!15!black}\scriptsize},
axis background/.style={fill=white},
every x tick label/.append style={opacity=0},
legend style={legend cell align=left,align=left,draw=none,font=\scriptsize,at={(0.99,0.99)},anchor=north east,fill=none,row sep=-0.1cm},
]

\addplot [DottedLineStyle1]
  table[]{plot_data/mse_\case_0.txt};
\addlegendentry{$\ell=0$};

\addplot [DottedLineStyle2]
  table[]{plot_data/mse_\case_1.txt};
\addlegendentry{$\ell=1$};

\addplot [DottedLineStyle3]
  table[]{plot_data/mse_\case_2.txt};
\addlegendentry{$\ell=2$};

\addplot [DottedLineStyle4]
  table[]{plot_data/mse_\case_3.txt};
\addlegendentry{$\ell=3$};

\addplot [DottedLineStyle5]
  table[]{plot_data/mse_\case_4.txt};
\addlegendentry{$\ell=4$};

\node[anchor=south west] at (rel axis cs:0.03,0.03) {\small $\lambda=\pgfplotstablegetelem{\caseMinusOne}{0}\of\rates\pgfmathprintnumber[fixed,zerofill]{\pgfplotsretval}$};

\end{axis}
\end{tikzpicture}%
	\tikzexternaldisable
\hfill%
				}{%
	\tikzexternalenable
	\tikzsetnextfilename{mse_\case}%
	\begin{tikzpicture}[trim axis left,trim axis right]

\pgfmathsetmacro{\caseMinusOne}{int(\case-1)}

\pgfplotstableread[header=false]{plot_data/table_rates_mse.txt}\rates

\begin{axis}[
width=\figurewidth,
height=\figureheight,
scale only axis,
xmode=log,
xmin=1,
xmax=1e5,
xminorticks=true,
every outer x axis line/.append style={white!15!black},
every x tick label/.append style={font=\color{white!15!black}\scriptsize},
every x label/.append style={font=\color{white!15!black}\scriptsize},
ymode=log,
ymin=1e-14,
ymax=1e-2,
yminorticks=true,
ylabel={\small $\VarOf{{\MeanQMCEstimatorWithoutSampleReuse}}$},
every outer y axis line/.append style={white!15!black},
every y tick label/.append style={font=\color{white!15!black}\scriptsize},
every y label/.append style={font=\color{white!15!black}\scriptsize},
axis background/.style={fill=white},
every x tick label/.append style={opacity=0},
legend style={legend cell align=left,align=left,draw=none,font=\scriptsize,at={(0.97,0.97)},anchor=north east,fill=none},
]

\addplot [DottedLineStyle1]
  table[]{plot_data/mse_\case_0.txt};

\addplot [DottedLineStyle2]
  table[]{plot_data/mse_\case_1.txt};

\addplot [DottedLineStyle3]
  table[]{plot_data/mse_\case_2.txt};

\addplot [DottedLineStyle4]
  table[]{plot_data/mse_\case_3.txt};

\addplot [DottedLineStyle5]
  table[]{plot_data/mse_\case_4.txt};

\node[anchor=south west] at (rel axis cs:0.03,0.03) {\small $\lambda=\pgfplotstablegetelem{\caseMinusOne}{0}\of\rates\pgfmathprintnumber[fixed,zerofill]{\pgfplotsretval}$};

\end{axis}
\end{tikzpicture}%
	\tikzexternaldisable
\hfill%
				}
			}{%
	\tikzexternalenable
	\tikzsetnextfilename{mse_\case}%
	\begin{tikzpicture}[trim axis left,trim axis right]

\pgfmathsetmacro{\caseMinusOne}{int(\case-1)}

\pgfplotstableread[header=false]{plot_data/table_rates_mse.txt}\rates

\begin{axis}[
width=\figurewidth,
height=\figureheight,
scale only axis,
xmode=log,
xmin=1,
xmax=1e5,
xminorticks=true,
xlabel={\small $N_\ell$},
every outer x axis line/.append style={white!15!black},
every x tick label/.append style={font=\color{white!15!black}\scriptsize},
every x label/.append style={font=\color{white!15!black}\scriptsize},
ymode=log,
ymin=1e-14,
ymax=1e-2,
yminorticks=true,
ylabel={\small $\VarOf{{\MeanQMCEstimatorWithoutSampleReuse}}$},
every outer y axis line/.append style={white!15!black},
every y tick label/.append style={font=\color{white!15!black}\scriptsize},
every y label/.append style={font=\color{white!15!black}\scriptsize},
axis background/.style={fill=white},
legend style={legend cell align=left,align=left,draw=none,font=\scriptsize,at={(0.97,0.97)},anchor=north east,fill=none},
]

\addplot [DottedLineStyle1]
  table[]{plot_data/mse_\case_0.txt};

\addplot [DottedLineStyle2]
  table[]{plot_data/mse_\case_1.txt};

\addplot [DottedLineStyle3]
  table[]{plot_data/mse_\case_2.txt};

\addplot [DottedLineStyle4]
  table[]{plot_data/mse_\case_3.txt};

\addplot [DottedLineStyle5]
  table[]{plot_data/mse_\case_4.txt};

\node[anchor=south west] at (rel axis cs:0.03,0.03) {\small $\lambda=\pgfplotstablegetelem{\caseMinusOne}{0}\of\rates\pgfmathprintnumber[fixed,zerofill]{\pgfplotsretval}$};

\end{axis}
\end{tikzpicture}%
	\tikzexternaldisable
\hfill%
			}
		}%
	}\\%
}%
\caption{\label{fig:MSE}Variance of the QMC estimator for the expected value of the difference $F_{\ell}-F_{\ell-1}$, averaged over all random shifts, for different levels and as function of the number of samples $N_\ell$ for $\lambda_c=0.1$ (left column), $\lambda_c=0.3$ (middle column), $\lambda_c=0.5$ (right column) and $\nu=0.5$ (top row), $\nu=1$ (middle row), $\nu=2$ (bottom row). The estimated rate $\lambda$ (averaged over all levels) corresponds to the rate in assumption~\eqref{A4}.}
\end{figure}

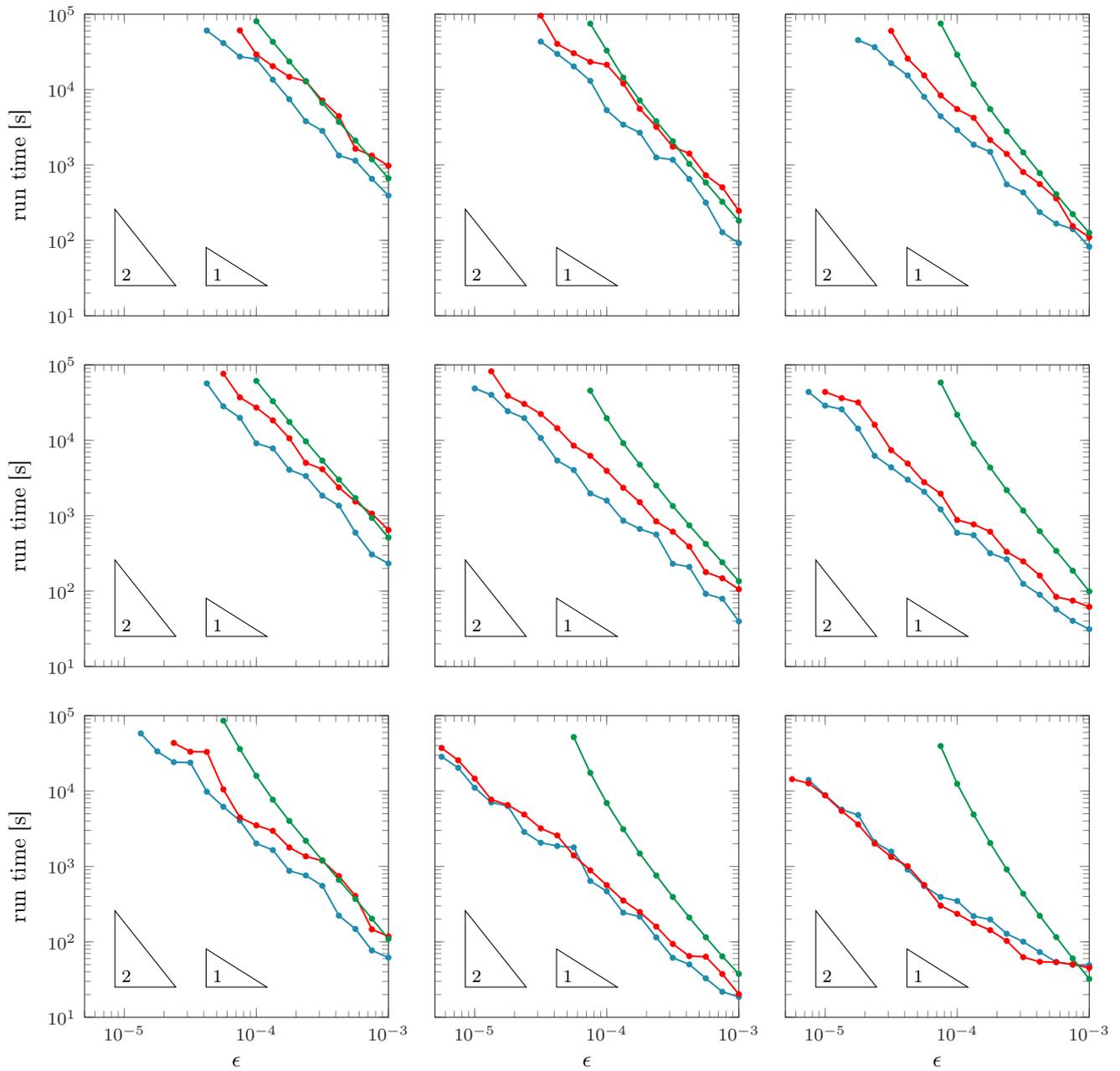
\begin{figure}[p]
\flushleft
\setlength\figurewidth{0.3\textwidth}
\setlength\figureheight{0.3\textwidth}
\foreach \j in {1,...,\n}{
	\foreach \i in {1,...,\n}{
		\pgfmathsetmacro{\case}{int((\j-1)*\n+\i)}
		\ifthenelse{\i>1}{%
			\ifthenelse{\j<3}{%
	\tikzexternalenable
	\tikzsetnextfilename{times_\case}%
	\begin{tikzpicture}[trim axis left,trim axis right]

\pgfmathsetmacro{\caseMinusOne}{int(\case-1)}

\begin{axis}[
width=\figurewidth,
height=\figureheight,
scale only axis,
xmode=log,
xmin=5e-6,
xmax=1e-3,
xminorticks=true,
every outer x axis line/.append style={white!15!black},
every x tick label/.append style={font=\color{white!15!black}\scriptsize},
every x label/.append style={font=\color{white!15!black}\scriptsize},
ymode=log,
ymin=1e1,
ymax=1e5,
yminorticks=true,
every outer y axis line/.append style={white!15!black},
every y tick label/.append style={font=\color{white!15!black}\scriptsize},
every y label/.append style={font=\color{white!15!black}\scriptsize},
every y tick label/.append style={opacity=0},
every x tick label/.append style={opacity=0},
axis background/.style={fill=white},
legend style={legend cell align=left,align=left,draw=none,font=\scriptsize,at={(0.03,0.03)},anchor=south west,fill=none},
]

\addplot [DottedLineStyle1]
  table[]{plot_data/case_\case_1_times.txt};

\addplot [DottedLineStyle2]
  table[]{plot_data/case_\case_2_times.txt};

\addplot [DottedLineStyle3]
  table[]{plot_data/case_\case_3_times.txt};

\LogLogSlopeTriangle{0.3}{0.2}{0.1}{2}{black};
\LogLogSlopeTriangle{0.6}{0.2}{0.1}{1}{black};

\end{axis}
\end{tikzpicture}%
	\tikzexternaldisable
\hfill%
			}{%
	\tikzexternalenable
	\tikzsetnextfilename{times_\case}%
	\begin{tikzpicture}[trim axis left,trim axis right]

\pgfmathsetmacro{\caseMinusOne}{int(\case-1)}

\begin{axis}[
width=\figurewidth,
height=\figureheight,
scale only axis,
xmode=log,
xmin=5e-6,
xmax=1e-3,
xminorticks=true,
xlabel={\small $\epsilon$},
every outer x axis line/.append style={white!15!black},
every x tick label/.append style={font=\color{white!15!black}\scriptsize},
every x label/.append style={font=\color{white!15!black}\scriptsize},
ymode=log,
ymin=1e1,
ymax=1e5,
yminorticks=true,
every outer y axis line/.append style={white!15!black},
every y tick label/.append style={font=\color{white!15!black}\scriptsize},
every y label/.append style={font=\color{white!15!black}\scriptsize},
every y tick label/.append style={opacity=0},
axis background/.style={fill=white},
legend style={legend cell align=left,align=left,draw=none,font=\scriptsize,at={(0.03,0.03)},anchor=south west,fill=none},
]

\addplot [DottedLineStyle1]
  table[]{plot_data/case_\case_1_times.txt};

\addplot [DottedLineStyle2]
  table[]{plot_data/case_\case_2_times.txt};

\addplot [DottedLineStyle3]
  table[]{plot_data/case_\case_3_times.txt};

\LogLogSlopeTriangle{0.3}{0.2}{0.1}{2}{black};
\LogLogSlopeTriangle{0.6}{0.2}{0.1}{1}{black};

\end{axis}
\end{tikzpicture}%
	\tikzexternaldisable
\hfill%
			}
		}{%
			\ifthenelse{\j<3}{%
	\tikzexternalenable
	\tikzsetnextfilename{times_\case}%
	\begin{tikzpicture}[trim axis left,trim axis right]

\pgfmathsetmacro{\caseMinusOne}{int(\case-1)}

\begin{axis}[
width=\figurewidth,
height=\figureheight,
scale only axis,
xmode=log,
xmin=5e-6,
xmax=1e-3,
xminorticks=true,
every outer x axis line/.append style={white!15!black},
every x tick label/.append style={font=\color{white!15!black}\scriptsize},
every x label/.append style={font=\color{white!15!black}\scriptsize},
ymode=log,
ymin=1e1,
ymax=1e5,
yminorticks=true,
ylabel={\small run time [s]},
every outer y axis line/.append style={white!15!black},
every y tick label/.append style={font=\color{white!15!black}\scriptsize},
every y label/.append style={font=\color{white!15!black}\scriptsize},
every x tick label/.append style={opacity=0},
axis background/.style={fill=white},
legend style={legend cell align=left,align=left,draw=none,font=\scriptsize,at={(0.03,0.03)},anchor=south west,fill=none},
]

\addplot [DottedLineStyle1]
  table[]{plot_data/case_\case_1_times.txt};

\addplot [DottedLineStyle2]
  table[]{plot_data/case_\case_2_times.txt};

\addplot [DottedLineStyle3]
  table[]{plot_data/case_\case_3_times.txt};

\LogLogSlopeTriangle{0.3}{0.2}{0.1}{2}{black};
\LogLogSlopeTriangle{0.6}{0.2}{0.1}{1}{black};

\end{axis}
\end{tikzpicture}%
	\tikzexternaldisable
\hfill%
			}{%
	\tikzexternalenable
	\tikzsetnextfilename{times_\case}%
	\begin{tikzpicture}[trim axis left,trim axis right]

\pgfmathsetmacro{\caseMinusOne}{int(\case-1)}

\begin{axis}[
width=\figurewidth,
height=\figureheight,
scale only axis,
xmode=log,
xmin=5e-6,
xmax=1e-3,
xminorticks=true,
xlabel={\small $\epsilon$},
every outer x axis line/.append style={white!15!black},
every x tick label/.append style={font=\color{white!15!black}\scriptsize},
every x label/.append style={font=\color{white!15!black}\scriptsize},
ymode=log,
ymin=1e1,
ymax=1e5,
yminorticks=true,
ylabel={\small run time [s]},
every outer y axis line/.append style={white!15!black},
every y tick label/.append style={font=\color{white!15!black}\scriptsize},
every y label/.append style={font=\color{white!15!black}\scriptsize},
axis background/.style={fill=white},
legend style={legend cell align=left,align=left,draw=none,font=\scriptsize,at={(0.03,0.03)},anchor=south west,fill=none},
]

\addplot [DottedLineStyle1]
  table[]{plot_data/case_\case_1_times.txt};
\label{mgmlqmc};

\addplot [DottedLineStyle2]
  table[]{plot_data/case_\case_2_times.txt};
\label{mlqmc};

\addplot [DottedLineStyle3]
  table[]{plot_data/case_\case_3_times.txt};
\label{mlmc};

\LogLogSlopeTriangle{0.3}{0.2}{0.1}{2}{black};
\LogLogSlopeTriangle{0.6}{0.2}{0.1}{1}{black};

\end{axis}
\end{tikzpicture}%
	\tikzexternaldisable
\hfill%
			}
		}%
	}\\%
}%
\caption{\label{fig:times}Comparison of the run time between MLMC (\ref{mlmc}), MLQMC (\ref{mlqmc}) and MG-MLQMC (\ref{mgmlqmc}) for $\lambda_c=0.1$ (left column), $\lambda_c=0.3$ (middle column), $\lambda_c=0.5$ (right column) and $\nu=0.5$ (top row), $\nu=1$ (middle row), $\nu=2$ (bottom row).}
\end{figure}

\begin{figure}[p]
\flushleft
\setlength\figurewidth{0.3\textwidth}
\setlength\figureheight{0.3\textwidth}
\foreach \j in {1,...,\n}{
	\foreach \i in {1,...,\n}{
		\pgfmathsetmacro{\case}{int((\j-1)*\n+\i)}
		\ifthenelse{\i>1}{%
			\ifthenelse{\j<3}{%
	\tikzexternalenable
	\tikzsetnextfilename{nsamples_\case}%
%
\begin{tikzpicture}[trim axis left, trim axis right]

\begin{axis}[
width=\figurewidth,
height=\figureheight,
scale only axis,
log origin=infty,
xmin=-0.3,
xmax=7.3,
xtick={0,...,7},
every outer x axis line/.append style={white!15!black},
every x tick label/.append style={font=\color{white!15!black}\scriptsize},
every x label/.append style={font=\color{white!15!black}\scriptsize},
ymode=log,
ymin=0.1,
ymax=1e7,
ytick={1e0,1e2,1e4,1e6},
yminorticks=true,
every outer y axis line/.append style={white!15!black},
every y tick label/.append style={font=\color{white!15!black}\scriptsize},
every y label/.append style={font=\color{white!15!black}\scriptsize},
axis background/.style={fill=white},
legend style={legend cell align=left,align=left,font=\tiny,draw=none,at={(1.03,1.03)},anchor=north east},
every y tick label/.append style={opacity=0},
every x tick label/.append style={opacity=0},
x axis line style= { draw opacity=0 },
xtick style={draw=none},
axis y line*=left,
y axis line style= { draw opacity=0 },
ytick style={draw=none},
ymajorgrids,
grid style={line width=1pt,white},
axis on top,
]
\addplot[area legend,draw opacity=0,ybar,bar width=0.6,draw=none,fill=BarColorTwo] plot table[] {plot_data/case_\case_samples_orig.txt};

\addplot[area legend,draw opacity=0,ybar,bar width=0.6,draw=none,fill=BarColorOne] plot table[] {plot_data/case_\case_samples_reused.txt};

\pgfplotstableread[header=false]{plot_data/case_\case_fractions.txt}\fractions
\pgfplotstablegetrowsof{\fractions}
\pgfmathsetmacro{\rows}{\pgfplotsretval-1}

\pgfplotstableread[header=false]{plot_data/case_\case_samples_orig.txt}\total

\pgfplotsset{
    after end axis/.code={
    	\foreach \j in {0,...,\rows} {
			\pgfplotstablegetelem{\j}{0}\of\fractions
   		\pgfmathsetmacro{\a}{\pgfplotsretval}
			\pgfplotstablegetelem{\j}{1}\of\total
   		\pgfmathsetmacro{\b}{\pgfplotsretval}
        	\node[right, align=left, text=black, anchor=south]
			at (axis cs:\j,\b) {\tiny\; \a\%};
		}
    }
}
\end{axis}
\end{tikzpicture}
	\tikzexternaldisable
\hfill%
			}{%
	\tikzexternalenable
	\tikzsetnextfilename{nsamples_\case}%
%
\begin{tikzpicture}[trim axis left, trim axis right]

\begin{axis}[
width=\figurewidth,
height=\figureheight,
scale only axis,
log origin=infty,
xmin=-0.3,
xmax=7.3,
xlabel={\small level},
xtick={0,...,7},
every outer x axis line/.append style={white!15!black},
every x tick label/.append style={font=\color{white!15!black}\scriptsize},
every x label/.append style={font=\color{white!15!black}\scriptsize},
ymode=log,
ymin=0.1,
ymax=1e7,
ytick={1e0,1e2,1e4,1e6},
yminorticks=true,
every outer y axis line/.append style={white!15!black},
every y tick label/.append style={font=\color{white!15!black}\scriptsize},
every y label/.append style={font=\color{white!15!black}\scriptsize},
every y tick label/.append style={opacity=0},
axis background/.style={fill=white},
legend style={legend cell align=left,align=left,font=\tiny,draw=none,at={(1.03,1.03)},anchor=north east},
x axis line style= { draw opacity=0 },
xtick style={draw=none},
axis y line*=left,
y axis line style= { draw opacity=0 },
ytick style={draw=none},
ymajorgrids,
grid style={line width=1pt,white},
axis on top,
]
\addplot[area legend,draw opacity=0,ybar,bar width=0.6,draw=none,fill=BarColorTwo] plot table[] {plot_data/case_\case_samples_orig.txt};

\addplot[area legend,draw opacity=0,ybar,bar width=0.6,draw=none,fill=BarColorOne] plot table[] {plot_data/case_\case_samples_reused.txt};

\pgfplotstableread[header=false]{plot_data/case_\case_fractions.txt}\fractions
\pgfplotstablegetrowsof{\fractions}
\pgfmathsetmacro{\rows}{\pgfplotsretval-1}

\pgfplotstableread[header=false]{plot_data/case_\case_samples_orig.txt}\total

\pgfplotsset{
    after end axis/.code={
    	\foreach \j in {0,...,\rows} {
			\pgfplotstablegetelem{\j}{0}\of\fractions
   		\pgfmathsetmacro{\a}{\pgfplotsretval}
			\pgfplotstablegetelem{\j}{1}\of\total
   		\pgfmathsetmacro{\b}{\pgfplotsretval}
        	\node[right, align=left, text=black, anchor=south]
			at (axis cs:\j,\b) {\tiny\; \a\%};
		}
    }
}
\end{axis}
\end{tikzpicture}
	\tikzexternaldisable
\hfill%
			}
		}{%
			\ifthenelse{\j<3}{%
	\tikzexternalenable
	\tikzsetnextfilename{nsamples_\case}%
%
\begin{tikzpicture}[trim axis left, trim axis right]

\begin{axis}[
width=\figurewidth,
height=\figureheight,
scale only axis,
log origin=infty,
xmin=-0.3,
xmax=7.3,
every x tick label/.append style={opacity=0},
xtick={0,...,7},
every outer x axis line/.append style={white!15!black},
every x tick label/.append style={font=\color{white!15!black}\scriptsize},
every x label/.append style={font=\color{white!15!black}\scriptsize},
ymode=log,
ymin=0.1,
ymax=1e7,
ytick={1e0,1e2,1e4,1e6},
yminorticks=true,
every outer y axis line/.append style={white!15!black},
every y tick label/.append style={font=\color{white!15!black}\scriptsize},
every y label/.append style={font=\color{white!15!black}\scriptsize},
ylabel={\small number of samples},
axis background/.style={fill=white},
legend style={legend cell align=left,align=left,font=\tiny,draw=none,at={(1.03,1.03)},anchor=north east},
x axis line style= { draw opacity=0 },
xtick style={draw=none},
axis y line*=left,
y axis line style= { draw opacity=0 },
ytick style={draw=none},
ymajorgrids,
grid style={line width=1pt,white},
axis on top,
]
\addplot[area legend,draw opacity=0,ybar,bar width=0.6,draw=none,fill=BarColorTwo] plot table[] {plot_data/case_\case_samples_orig.txt};

\addplot[area legend,draw opacity=0,ybar,bar width=0.6,draw=none,fill=BarColorOne] plot table[] {plot_data/case_\case_samples_reused.txt};

\pgfplotstableread[header=false]{plot_data/case_\case_fractions.txt}\fractions
\pgfplotstablegetrowsof{\fractions}
\pgfmathsetmacro{\rows}{\pgfplotsretval-1}

\pgfplotstableread[header=false]{plot_data/case_\case_samples_orig.txt}\total

\pgfplotsset{
    after end axis/.code={
    	\foreach \j in {0,...,\rows} {
			\pgfplotstablegetelem{\j}{0}\of\fractions
   		\pgfmathsetmacro{\a}{\pgfplotsretval}
			\pgfplotstablegetelem{\j}{1}\of\total
   		\pgfmathsetmacro{\b}{\pgfplotsretval}
        	\node[right, align=left, text=black, anchor=south]
			at (axis cs:\j,\b) {\tiny\; \a\%};
		}
    }
}
\end{axis}
\end{tikzpicture}
	\tikzexternaldisable
\hfill%
			}{%
	\tikzexternalenable
	\tikzsetnextfilename{nsamples_\case}%
%
\begin{tikzpicture}[trim axis left, trim axis right]

\begin{axis}[
width=\figurewidth,
height=\figureheight,
scale only axis,
log origin=infty,
xmin=-0.3,
xmax=7.3,
xlabel={\small level},
xtick={0,...,7},
every outer x axis line/.append style={white!15!black},
every x tick label/.append style={font=\color{white!15!black}\scriptsize},
every x label/.append style={font=\color{white!15!black}\scriptsize},
ymode=log,
ymin=0.1,
ymax=1e7,
ytick={1e0,1e2,1e4,1e6},
yminorticks=true,
every outer y axis line/.append style={white!15!black},
every y tick label/.append style={font=\color{white!15!black}\scriptsize},
every y label/.append style={font=\color{white!15!black}\scriptsize},
ylabel={\small number of samples},
axis background/.style={fill=white},
legend style={legend cell align=left,align=left,font=\tiny,draw=none,at={(1.03,1.03)},anchor=north east},
x axis line style= { draw opacity=0 },
xtick style={draw=none},
axis y line*=left,
y axis line style= { draw opacity=0 },
ytick style={draw=none},
ymajorgrids,
grid style={line width=1pt,white},
axis on top,
]
\addplot[area legend,draw opacity=0,ybar,bar width=0.6,draw=none,fill=BarColorTwo] plot table[] {plot_data/case_\case_samples_orig.txt};
\label{original};

\addplot[area legend,draw opacity=0,ybar,bar width=0.6,draw=none,fill=BarColorOne] plot table[] {plot_data/case_\case_samples_reused.txt};
\label{reused};

\pgfplotstableread[header=false]{plot_data/case_\case_fractions.txt}\fractions
\pgfplotstablegetrowsof{\fractions}
\pgfmathsetmacro{\rows}{\pgfplotsretval-1}

\pgfplotstableread[header=false]{plot_data/case_\case_samples_orig.txt}\total

\pgfplotsset{
    after end axis/.code={
    	\foreach \j in {0,...,\rows} {
			\pgfplotstablegetelem{\j}{0}\of\fractions
   		\pgfmathsetmacro{\a}{\pgfplotsretval}
			\pgfplotstablegetelem{\j}{1}\of\total
   		\pgfmathsetmacro{\b}{\pgfplotsretval}
        	\node[right, align=left, text=black, anchor=south]
			at (axis cs:\j,\b) {\tiny\; \a\%};
		}
    }
}
\end{axis}
\end{tikzpicture}
	\tikzexternaldisable
\hfill%
			}
		}%
	}\\%
}%
\caption{\label{fig:samples}Number of original (\ref{original}) and recycled (\ref{reused}) samples taken on each level in the MG-MLQMC algorithm for $\lambda_c=0.1$ (left column), $\lambda_c=0.3$ (middle column), $\lambda_c=0.5$ (right column) and $\nu=0.5$ (top row), $\nu=1$ (middle row), $\nu=2$ (bottom row). The numbers indicate the percentage of recycled samples.}
\end{figure}
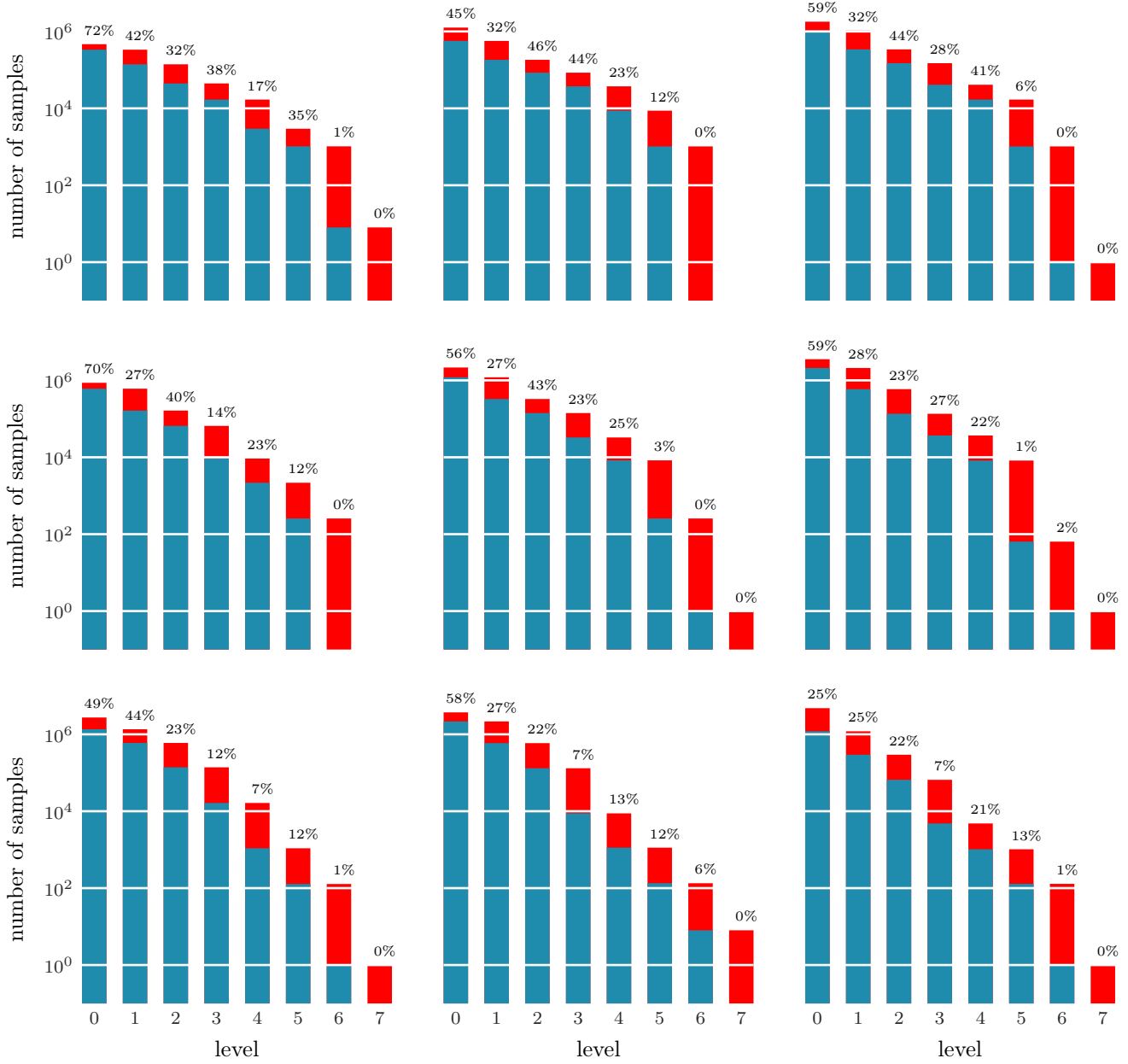

\clearpage

\def\MyColWidth{1.25cm}
\begin{table}[t]
\centering
$\begin{array}{ccR{\MyColWidth}R{\MyColWidth}R{\MyColWidth}}\toprule
\lambda_c & \nu & $S_1$ & $S_2$ & $S_3$ \\ \midrule
\forloop{i}{1}{\value{i} < \n}{
\forloop{j}{1}{\value{j} < \n}{
		\pgfmathsetmacro{\idx}{int((\arabic{i}-1)*\n+\arabic{j}-1)}
		\pgfplotstablegetelem{\idx}{0}\of\params\pgfmathprintnumber[fixed,zerofill,precision=1]{\pgfplotsretval} &
		\pgfmathsetmacro{\idx}{int((\arabic{i}-1)*\n+\arabic{j}-1)}
		\pgfplotstablegetelem{\idx}{1}\of\params\pgfmathprintnumber[fixed,zerofill,precision=1]{\pgfplotsretval} & 
		\pgfmathsetmacro{\idx}{int((\arabic{i}-1)*\n+\arabic{j}-1)}
		\pgfplotstablegetelem{\idx}{0}\of\speedup\pgfmathprintnumber[fixed,zerofill,precision=2]{\pgfplotsretval} & 
		\pgfmathsetmacro{\idx}{int((\arabic{i}-1)*\n+\arabic{j}-1)}
		\pgfplotstablegetelem{\idx}{1}\of\speedup\pgfmathprintnumber[fixed,zerofill,precision=2]{\pgfplotsretval} & 
		\pgfmathsetmacro{\idx}{int((\arabic{i}-1)*\n+\arabic{j}-1)}
		\pgfplotstablegetelem{\idx}{2}\of\speedup\pgfmathprintnumber[fixed,zerofill,precision=2]{\pgfplotsretval}\tabularnewline
}
		\pgfmathsetmacro{\idx}{int((\arabic{i}-1)*\n+\arabic{j}-1)}
		\pgfplotstablegetelem{\idx}{0}\of\params\pgfmathprintnumber[fixed,zerofill,precision=1]{\pgfplotsretval} &
		\pgfmathsetmacro{\idx}{int((\arabic{i}-1)*\n+\arabic{j}-1)}
		\pgfplotstablegetelem{\idx}{1}\of\params\pgfmathprintnumber[fixed,zerofill,precision=1]{\pgfplotsretval} & 
		\pgfmathsetmacro{\idx}{int((\arabic{i}-1)*\n+\arabic{j}-1)}
		\pgfplotstablegetelem{\idx}{0}\of\speedup\pgfmathprintnumber[fixed,zerofill,precision=2]{\pgfplotsretval} & 
		\pgfmathsetmacro{\idx}{int((\arabic{i}-1)*\n+\arabic{j}-1)}
		\pgfplotstablegetelem{\idx}{1}\of\speedup\pgfmathprintnumber[fixed,zerofill,precision=2]{\pgfplotsretval} & 
		\pgfmathsetmacro{\idx}{int((\arabic{i}-1)*\n+\arabic{j}-1)}
		\pgfplotstablegetelem{\idx}{2}\of\speedup\pgfmathprintnumber[fixed,zerofill,precision=2]{\pgfplotsretval}\tabularnewline\midrule
}
\forloop{j}{1}{\value{j} < \n}{
		\pgfmathsetmacro{\idx}{int((\arabic{i}-1)*\n+\arabic{j}-1)}
		\pgfplotstablegetelem{\idx}{0}\of\params\pgfmathprintnumber[fixed,zerofill,precision=1]{\pgfplotsretval} &
		\pgfmathsetmacro{\idx}{int((\arabic{i}-1)*\n+\arabic{j}-1)}
		\pgfplotstablegetelem{\idx}{1}\of\params\pgfmathprintnumber[fixed,zerofill,precision=1]{\pgfplotsretval} & 
		\pgfmathsetmacro{\idx}{int((\arabic{i}-1)*\n+\arabic{j}-1)}
		\pgfplotstablegetelem{\idx}{0}\of\speedup\pgfmathprintnumber[fixed,zerofill,precision=2]{\pgfplotsretval} & 
		\pgfmathsetmacro{\idx}{int((\arabic{i}-1)*\n+\arabic{j}-1)}
		\pgfplotstablegetelem{\idx}{1}\of\speedup\pgfmathprintnumber[fixed,zerofill,precision=2]{\pgfplotsretval} & 
		\pgfmathsetmacro{\idx}{int((\arabic{i}-1)*\n+\arabic{j}-1)}
		\pgfplotstablegetelem{\idx}{2}\of\speedup\pgfmathprintnumber[fixed,zerofill,precision=2]{\pgfplotsretval}\tabularnewline
}
		\pgfmathsetmacro{\idx}{int((\arabic{i}-1)*\n+\arabic{j}-1)}
		\pgfplotstablegetelem{\idx}{0}\of\params\pgfmathprintnumber[fixed,zerofill,precision=1]{\pgfplotsretval} &
		\pgfmathsetmacro{\idx}{int((\arabic{i}-1)*\n+\arabic{j}-1)}
		\pgfplotstablegetelem{\idx}{1}\of\params\pgfmathprintnumber[fixed,zerofill,precision=1]{\pgfplotsretval} & 
		\pgfmathsetmacro{\idx}{int((\arabic{i}-1)*\n+\arabic{j}-1)}
		\pgfplotstablegetelem{\idx}{0}\of\speedup\pgfmathprintnumber[fixed,zerofill,precision=2]{\pgfplotsretval} & 
		\pgfmathsetmacro{\idx}{int((\arabic{i}-1)*\n+\arabic{j}-1)}
		\pgfplotstablegetelem{\idx}{1}\of\speedup\pgfmathprintnumber[fixed,zerofill,precision=2]{\pgfplotsretval} & 
		\pgfmathsetmacro{\idx}{int((\arabic{i}-1)*\n+\arabic{j}-1)}
		\pgfplotstablegetelem{\idx}{2}\of\speedup\pgfmathprintnumber[fixed,zerofill,precision=2]{\pgfplotsretval}\tabularnewline\bottomrule
\end{array}$
\caption{\label{tab:speedup}Speedup of MLQMC over MLMC ($S_1$) and MG-MLQMC over MLMC ($S_2$) and MLQMC ($S_3$).}
\end{table}

\subsection{Results and comparison}

We compare the performance of MLQMC, with and without recycling, against a basic MLMC simulation. We ran all simulations for a sequence of decreasing tolerances, starting from $\epsilon=0.001$. The results are shown in~\figref{fig:times}. Observe that the asymptotic complexity rate of the QMC methods is increasingly better (that is, the slope of the lines decrease) as the smoothness and correlation length in the Mat\'ern covariance function increase, i.e., when moving from top to bottom and left to right. In particular, this means that fewer computational resources are needed to achieve a certain tolerance on the MSE~\eqref{eq:MSE} (note the logarithmic time axis). Also, and consistent with our previous analysis, we observe that the benefit of the sample recycling decreases accordingly. That is, the lines for MG-MLQMC and MLQMC are closer together when moving towards the bottom right of~\figref{fig:times}.

These results are summarized in~\tabref{tab:speedup}, where the speedup is shown for all parameter combinations, averaged over all tolerances $\epsilon$. The first column shows the speedup of MLQMC over MLMC, i.e,
\begin{equation}
S_1 = \frac{\mathrm{time}({{\mathcal{Q}}^\text{MC}_{L}})}{\mathrm{time}({{\bar{\mathcal{Q}}}^\text{QMC}_{L,\nbshifts}})}. \nonumber
\end{equation}
Columns two and three show
\begin{equation}
S_2 = \frac{\mathrm{time}({{\mathcal{Q}}^\text{MC}_{L}})}{\mathrm{time}({{\bar{\mathcal{Q}}}^\text{QMC}_{L,\nbshifts,\text{reuse}}})} \quad \textrm{and} \quad S_3 = \frac{\mathrm{time}({{\bar{\mathcal{Q}}}^\text{QMC}_{L,\nbshifts}})}{\mathrm{time}({{\bar{\mathcal{Q}}}^\text{QMC}_{L,\nbshifts,\text{reuse}}})}, \nonumber
\end{equation}
the speedup of MG-MLQMC over MLMC, and MG-MLQMC over MLQMC respectively. From this table, we conclude that the MLQMC algorithm is a very efficient algorithm compared to the basic MLMC method. The method performs better as the covariance function becomes smoother. However, by using an FMG solver and reusing samples on coarser levels, we can gain another factor two in computational work reduction.

To better understand the benefit of the sample recycling, we show the sample distribution across the levels for the finest computed tolerance in the MG-MLQMC method in~\figref{fig:samples}. First, notice that the $N_\ell$ is a decreasing sequence with $\ell$, as required. Secondly, the number of samples decreases faster when moving from top to bottom, indicating that there are fewer samples to be reused on coarser levels. Also indicated on the figure is the percentage of recycled samples, i.e., $N_\ell/N_{\ell+1}\cdot100\%$. The higher this value, the better the method with sample recycling will perform, compared to standard MLQMC without recycling. These numbers should be compared to the results in~\figref{fig:times} and~\tabref{tab:speedup}.

\section{Conclusions and future work}

In this work, we have introduced an algorithm that combines MLMC with a Multigrid method, similar to the work in~\cite{kumar2017multigrid}. Our goal was to recycle coarse approximations from the FMG method as samples in the MLMC method, in such a way that no additional error is introduced.  We presented a practical algorithm that achieves this goal in~\secref{sec:practical} and analyzed its cost in~\secref{sec:cost_anal}. 

In the numerical experiments with our new method, we confined ourselves to the well-known two-dimensional elliptic PDE with random coefficients, where the uncertain coefficients are modeled as a lognormal random field with a covariance function of Mat\'ern type. We performed various simulations with different combinations of correlation length and smoothness in this covariance function. We observed that, as the problem becomes more difficult (i.e., the covariance function becomes less smooth), there is more gain from the sample recycling. Hence, our method works in tandem with the QMC method, which works better as the smoothness parameter in the covariance function increases.

A next step would be to replace the Multigrid method by an Algebraic Multigrid (AMG) method. Contrary to the geometric Multigrid method we used in this work, AMG does not require a physical grid to be associated with each level. Instead, coarsening happens fully based on the matrix entries, resulting in a completely autonomous solver.

Another way to combine MLMC with Multigrid is to use the Multi-Index setting from~\cite{haji2016multi}, where multiple directions of refinement are allowed. One direction would then correspond to the physical discretization, the other direction would be the number of V-cycles in the Multigrid method. This could then be combined with the adaptive approach from~\cite{robbe2017dimension}.

\section*{Acknowledgments}
This research was funded by project IWT/SBO EUFORIA: ``Efficient Uncertainty quantification For Optimization in Robust design of Industrial Applications'' (IWT-140068) of the Agency for Innovation by Science and Technology, Flanders, Belgium. We gratefully acknowledge the financial support from the Statistical and Applied Mathematical Sciences Institute (SAMSI) 2017 Year-long Program on Quasi-Monte Carlo and High-Dimensional Sampling Methods for Applied Mathematics.

\bibliographystyle{plain}
\bibliography{references}

\begin{thebibliography}{10}

\bibitem{atkinson2009numerical}
{\sc K.~Atkinson and W.~Han}, {\em {Numerical Solution of Fredholm Integral
  Equations of the Second Kind}}, in Theoretical Numerical Analysis, Texts in
  Applied Mathematics, vol.~39, Springer, New York, NY, 2009, pp.~473--549,
  \url{https://doi.org/10.1007/978-1-4419-0458-4_12}.

\bibitem{barth2011multi}
{\sc A.~Barth, C.~Schwab, and N.~Zollinger}, {\em {Multi-level Monte Carlo
  Finite Element Method for Elliptic PDEs with Stochastic Coefficients}},
  Numerische Mathematik, 119 (2011), pp.~123--161,
  \url{https://doi.org/10.1007/s00211-011-0377-0}.

\bibitem{briggs2000multigrid}
{\sc W.~Briggs, V.~Henson, and S.~McCormick}, {\em {A Multigrid Tutorial}},
  SIAM, Philadelphia, PA, second~ed., 2000,
  \url{https://doi.org/10.1137/1.9780898719505}.

\bibitem{cliffe2011multilevel}
{\sc K.~A. Cliffe, M.~B. Giles, R.~Scheichl, and A.~L. Teckentrup}, {\em
  {Multilevel Monte Carlo Methods and Applications to Elliptic PDEs with Random
  Coefficients}}, Computing and Visualization in Science, 14 (2011), pp.~3--15,
  \url{https://doi.org/10.1007/s00791-011-0160-x}.

\bibitem{detommaso2018continuous}
{\sc G.~Detommaso, T.~Dodwell, and R.~Scheichl}, {\em {Continuous Level Monte
  Carlo and Sample-Adaptive Model Hierarchies}}, ArXiv preprint,  (2018),
  \url{https://arxiv.org/abs/1802.07539}.

\bibitem{dick2013high}
{\sc J.~Dick, F.~Y. Kuo, and I.~H. Sloan}, {\em {High-Dimensional Integration:
  The Quasi-Monte Carlo way}}, Acta Numerica, 22 (2013), pp.~133--288,
  \url{https://doi.org/10.1017/S0962492913000044}.

\bibitem{drzisga2017scheduling}
{\sc D.~Drzisga, B.~Gmeiner, U.~Rüde, R.~Scheichl, and B.~Wohlmuth}, {\em
  {Scheduling Massively Parallel Multigrid for Multilevel Monte Carlo
  Methods}}, SIAM Journal on Scientific Computing, 39 (2017), pp.~S873--S897,
  \url{https://doi.org/10.1137/16M1083591}.

\bibitem{eigel2016adaptive}
{\sc M.~Eigel, C.~Merdon, and J.~Neumann}, {\em {An Adaptive Multilevel Monte
  Carlo Method with Stochastic Bounds for Quantities of Interest with Uncertain
  Data}}, SIAM/ASA Journal on Uncertainty Quantification, 4 (2016),
  pp.~1219--1245, \url{https://doi.org/10.1137/15M1016448}.

\bibitem{elfverson2016multilevel}
{\sc D.~Elfverson, F.~Hellman, and A.~M{\aa}lqvist}, {\em {A Multilevel Monte
  Carlo Method for Computing Failure Probabilities}}, SIAM/ASA Journal on
  Uncertainty Quantification, 4 (2016), pp.~312--330,
  \url{https://doi.org/10.1137/140984294}.

\bibitem{ernst2009efficient}
{\sc O.~G. Ernst, C.~E. Powell, D.~J. Silvester, and E.~Ullmann}, {\em
  {Efficient Solvers for a Linear Stochastic Galerkin Mixed Formulation of
  Diffusion Problems with Random Data}}, SIAM Journal on Scientific Computing,
  31 (2009), pp.~1424--1447, \url{https://doi.org/10.1137/070705817}.

\bibitem{ghanem1991stochastic}
{\sc R.~G. Ghanem and P.~D. Spanos}, {\em {Stochastic Finite Elements: A
  Spectral Approach}}, Springer, New York, NY, 1991,
  \url{https://doi.org/10.1007/978-1-4612-3094-6}.

\bibitem{giles2008multilevel}
{\sc M.~B. Giles}, {\em {Multilevel Monte Carlo Path Simulation}}, Operations
  Research, 56 (2008), pp.~607--617,
  \url{https://doi.org/10.1287/opre.1070.0496}.

\bibitem{giles2015multilevel}
{\sc M.~B. Giles}, {\em {Multilevel Monte Carlo Methods}}, Acta Numerica, 24
  (2015), pp.~259--328, \url{https://doi.org/10.1017/S096249291500001X}.

\bibitem{giles2009multilevel}
{\sc M.~B. Giles and B.~J. Waterhouse}, {\em {Multilevel Quasi-Monte Carlo Path
  Simulation}}, Advanced Financial Modelling, Radon Series on Computational and
  Applied Mathematics,  (2009), pp.~165--181,
  \url{https://doi.org/10.1515/9783110213140.165}.

\bibitem{graham2017analysis}
{\sc I.~G. Graham, F.~Y. Kuo, D.~Nuyens, R.~Scheichl, and I.~H. Sloan}, {\em
  {Analysis of Circulant Embedding Methods for Sampling Stationary Random
  Fields}}, SIAM Journal on Numerical Analysis, accepted,  (2018),
  \url{https://arxiv.org/abs/arXiv:1710.00751}.

\bibitem{hackbusch2013multi}
{\sc W.~Hackbusch}, {\em {Multi-Grid Methods and Applications}},
  Springer-Verlag, Berlin, Heidelberg, 1985,
  \url{https://doi.org/10.1007/978-3-662-02427-0}.

\bibitem{hackbusch1994iterative}
{\sc W.~Hackbusch}, {\em {Iterative Solution of Large Sparse Systems of
  Equations}}, Springer-Verlag, New York, 1994,
  \url{https://doi.org/10.1007/978-3-319-28483-5}.

\bibitem{haji2016multi}
{\sc A.-L. Haji-Ali, F.~Nobile, and R.~Tempone}, {\em {Multi-Index Monte Carlo:
  When Sparsity Meets Sampling}}, Numerische Mathematik, 132 (2016),
  pp.~767--806, \url{https://doi.org/10.1007/s00211-015-0734-5}.

\bibitem{haji2014optimization}
{\sc A.-L. Haji-Ali, F.~Nobile, E.~von Schwerin, and R.~Tempone}, {\em
  {Optimization of Mesh Hierarchies in Multilevel Monte Carlo Samplers}},
  Stochastics and Partial Differential Equations Analysis and Computations, 4
  (2016), pp.~76--112, \url{https://doi.org/10.1007/s40072-015-0049-7}.

\bibitem{hoel2012adaptive}
{\sc H.~Hoel, E.~Von~Schwerin, A.~Szepessy, and R.~Tempone}, {\em {Adaptive
  Multilevel Monte Carlo Simulation}}, in Numerical Analysis of Multiscale
  Computations, Springer-Verlag, Berlin, Heidelberg, 2012, pp.~217--234,
  \url{https://doi.org/10.1007/978-3-642-21943-6_10}.

\bibitem{kumar2017multigrid}
{\sc P.~Kumar, C.~W. Oosterlee, and R.~P. Dwight}, {\em {A Multigrid Multilevel
  Monte Carlo Method using High-order Finite-volume Scheme for Lognormal
  Diffusion Problems}}, International Journal for Uncertainty Quantification, 7
  (2017),
  \url{https://doi.org/10.1615/Int.J.UncertaintyQuantification.2016018677}.

\bibitem{kuo2016application}
{\sc F.~Y. Kuo and D.~Nuyens}, {\em {Application of Quasi-Monte Carlo Methods
  to Elliptic PDEs with Random Diffusion Coefficients: A Survey of Analysis and
  Implementation}}, Foundations of Computational Mathematics, 16 (2016),
  pp.~1631--1696, \url{https://doi.org/10.1007/s10208-016-9329-5},
  \url{http://dx.doi.org/10.1007/s10208-016-9329-5}.

\bibitem{kuo2015multilevel}
{\sc F.~Y. Kuo, R.~Scheichl, C.~Schwab, I.~H. Sloan, and E.~Ullmann}, {\em
  {Multilevel Quasi-Monte Carlo Methods for Lognormal Diffusion Problems}},
  Mathematics of Computation, 86 (2017), pp.~2827--2860,
  \url{https://doi.org/10.1090/mcom/3207}.

\bibitem{kuo2011quasi}
{\sc F.~Y. Kuo, C.~Schwab, and I.~H. Sloan}, {\em {Quasi-Monte Carlo Methods
  for High-Dimensional Integration: the Standard (Weighted Hilbert Space)
  Setting and Beyond}}, The ANZIAM Journal, 53 (2011), pp.~1--37,
  \url{https://doi.org/10.1017/S1446181112000077}.

\bibitem{lemaitre2010spectral}
{\sc O.~Le~Ma{\^\i}tre and O.~M. Knio}, {\em {Spectral Methods for Uncertainty
  Quantification: with Applications to Computational Fluid Dynamics}}, Springer
  Science \& Business Media, 2010,
  \url{https://doi.org/10.1007/978-90-481-3520-2}.

\bibitem{niederreiter1992random}
{\sc H.~Niederreiter}, {\em {Random Number Generation and Quasi-Monte Carlo
  Methods}}, SIAM, Philadelphia, PA, 1992,
  \url{https://doi.org/10.1137/1.9781611970081}.

\bibitem{nuyens2006fast}
{\sc D.~Nuyens and R.~Cools}, {\em {Fast Algorithms for Component-by-component
  Construction of Rank-1 Lattice Rules in Shift-Invariant Reproducing Kernel
  Hilbert Spaces}}, Mathematics of Computation, 75 (2006), pp.~903--920,
  \url{https://doi.org/10.1090/s0025-5718-06-01785-6}.

\bibitem{QMC4PDE}
{\sc D.~Nuyens and F.~Y. Kuo}, {\em {QMC4PDE: A Practical Guide to the Software
  for Constructing Point Sets and Point Generator Code}}, 2016,
  \url{https://people.cs.kuleuven.be/~dirk.nuyens/qmc4pde/}.

\bibitem{rhee2015unbiased}
{\sc C.-h. Rhee and P.~W. Glynn}, {\em {Unbiased Estimation with Square Root
  Convergence for SDE Models}}, Operations Research, 63 (2015), pp.~1026--1043,
  \url{https://doi.org/10.1287/opre.2015.1404}.

\bibitem{robbe2018multilevelestimators}
{\sc P.~Robbe}, {\em {{MultilevelEstimators.jl}: Multilevel Monte Carlo for
  Julia}}, 2018,
  \url{https://github.com/PieterjanRobbe/MultilevelEstimators.jl}.

\bibitem{robbe2016thesis}
{\sc P.~Robbe, D.~Nuyens, and S.~Vandewalle}, {\em {A Practical Multilevel
  Quasi-Monte Carlo Method for Elliptic PDEs with Random Coefficients}},
  masters thesis, ``Een Parallelle Multilevel Monte-Carlo-Methode voor de
  Simulatie van Stochastische Parti\"ele {Differentiaalvergelijkingen}'' by P.
  Robbe, KU Leuven, 2016.

\bibitem{robbe2017multi}
{\sc P.~Robbe, D.~Nuyens, and S.~Vandewalle}, {\em {A Multi-Index Quasi--Monte
  Carlo Algorithm for Lognormal Diffusion Problems}}, SIAM Journal on
  Scientific Computing, 39 (2017), pp.~S851--S872,
  \url{https://doi.org/10.1137/16M1082561}.

\bibitem{robbe2017dimension}
{\sc P.~Robbe, D.~Nuyens, and S.~Vandewalle}, {\em {A Dimension-Adaptive
  Multi-Index Monte Carlo Method Applied to a Model of A Heat Exchanger}}, in
  12th International Conference on Monte Carlo and Quasi-Monte Carlo Methods in
  Scientific Computing, in press, 2018.

\bibitem{ruge5algebraic}
{\sc J.~Ruge and K.~St\"{u}ben}, {\em {Algebraic Multigrid Methods (AMG)}}, in
  Multigrid Methods, Frontiers in Applied Mathematics, vol.~5, SIAM, 1987,
  \url{https://doi.org/10.1137/1.9781611971057.ch4}.

\bibitem{saad2003iterative}
{\sc Y.~Saad}, {\em {Iterative Methods for Sparse Linear Systems}}, SIAM,
  Philadelphia, PA, 2003, \url{https://doi.org/10.1137/1.9780898718003}.

\bibitem{sloan1994lattice}
{\sc I.~H. Sloan and S.~Joe}, {\em {Lattice Methods for Multiple Integration}},
  Oxford University Press, New York and Oxford, 1994,
  \url{https://doi.org/10.1137/1038021}.

\bibitem{sobol1967distribution}
{\sc I.~Sobol'}, {\em {On the Distribution of Points in a Cube and the
  Approximate Evaluation of Integrals}}, USSR Computational Mathematics and
  Mathematical Physics, 7 (1967), pp.~86--112,
  \url{https://doi.org/10.1016/0041-5553(67)90144-9}.

\bibitem{trottenberg2000multigrid}
{\sc U.~Trottenberg, C.~W. Oosterlee, and A.~Schuller}, {\em Multigrid},
  Academic Press Inc., San Diego, CA, with contributions by A. Brandt, P.
  Oswald and K. St\"uben, 2000.

\bibitem{yang2002boomeramg}
{\sc H.~Van~Emden and U.~M. Yang}, {\em {BoomerAMG: A Parallel Algebraic
  Multigrid Solver and Preconditioner}}, Applied Numerical Mathematics, 41
  (2002), pp.~155--177, \url{https://doi.org/10.1016/S0168-9274(01)00115-5}.

\end{thebibliography}

\end{document}